\newcolumntype{Y}{>{\small\raggedright\arraybackslash}X}
\theoremstyle{definition}
\newtheorem{thm}{Theorem}[section]
\newtheorem{question}[thm]{Question}
\newtheorem{defi}[thm]{Definition}
\newtheorem{prop}[thm]{Proposition}
\newtheorem{cor}[thm]{Corollary}
\newtheorem{lem}[thm]{Lemma}
\newtheorem{rmk}[thm]{Remark}
\newtheorem{nota}[thm]{Notation}
\newtheorem*{notations}{Notations}
\newtheorem*{acknowledgement}{Acknowledgement}
\theoremstyle{remark}
\begin{document}

\title{Derived categories of surfaces isogenous to a higher product}
\author{Kyoung-Seog Lee}
\maketitle

\begin{abstract}

Let $S=(C\times D)/G$ be a surface isogenous to a higher product of unmixed type with $p_g=q=0$, $G=(\mathbb{Z}/3)^2$. We construct exceptional sequences of line bundles of maximal length on $S$. As a consequence we find new examples of quasiphantom categories.

\end{abstract}

\section{Introduction}

Recently derived categories of surfaces of general type draw lots of
attention. Several interesting semiorthogonal decompositions of the
derived categories were constructed by B\"{o}hning, Graf von
Bothmer, and Sosna on the classical Godeaux surface {\cite{BBS}};
Alexeev and Orlov on the primary Burniat surfaces {\cite{AO}};
Galkin and Shinder on the Beauville surface {\cite{GS}};
B\"{o}hning, Graf von Bothmer, Katzarkov and Sosna on the
determinantal Barlow surfaces {\cite{BBKS}}; Fakhruddin on some fake
projective planes{\cite{F}}; Galkin, Katzarkov, Mellit and Shinder
on some different fake projective planes and on a fake cubic surface
\cite{GKMS}; Coughlan on some surfaces obtained as abelian
coverings of del Pezzo surfaces \cite{C}. These semiorthogonal
decompositions consist of admissible subcategories generated by
exceptional sequences of line bundles of maximal lengths and their
orthogonal complements. These orthogonal complements have vanishing Hochschild
homologies and finite Grothendieck groups. An admissible
triangulated subcategory of a derived category of a smooth
projective variety is called a quasiphantom category if its Hochschild
homology vanishes and its Grothendieck group is finite. When the
Grothendieck group of a quasiphantom category also vanishes it is
called a phantom category. Gorchinskiy and Orlov in {\cite{GO}}
constructed phantom categories using quasiphantom categories
constructed in {\cite{AO}}, {\cite{BBS}}, {\cite{GS}}. Determinantal
Barlow surfaces also provide examples of phantom categories
{\cite{BBKS}}.

Let $S$ be a surface isogenous to a higher product $(C \times D) /G$ of unmixed type with $p_g=q=0$. If $G$ is an abelian group, Bauer and Catanese {\cite{BC}} proved that $G$ is one of $(\mathbb{Z}/2)^3, (\mathbb{Z}/2)^4, (\mathbb{Z}/3)^2, (\mathbb{Z}/5)^2$. As mentioned above, Galkin and Shinder {\cite{GS}} constructed exceptional sequences of line bundles of maximal length on the Beauville surface which is a surface isogenous to a higher product with $p_g=q=0$ and $G=(\mathbb{Z}/5)^2$. Motivated by their work, we study the derived categories of $2$-dimensional family of surfaces isogenous to a higher product with $p_g=q=0$, $G=(\mathbb{Z}/3)^2$. In this paper, we construct exceptional sequences of line bundles of maximal length on these surfaces and prove that the complements of the admissible subcategories generated by these line bundles are quasiphantom categories. This gives new examples of quasiphantom categories having Grothendieck groups $(\mathbb{Z}/3)^5$ and these categories can be used to construct phantom categories by a theorem of Gorchinskiy and Orlov {\cite{GO}}.

The constructions of exceptional sequences are as follows. Let $S = (C \times D) /G$ be a surface isogenous to a higher product of unmixed type with $p_g=q=0$, $G=(\mathbb{Z}/3)^2$. First we show that there are exactly two $G$-invariant effective divisors of degree $3$ on $C$ which are not linearly equivalent. Let us denote them by $E_1$ and $E_2$. Then we show that $H^k(C,\mathcal{O}_C(2E_1-E_2))$ vanishes for all $k \in \mathbb{Z}$. Similary there are exactly two $G$-invariant effective divisors of degree $3$ on $D$ which are not linearly equivalent. Let us denote them by $F_1$ and $F_2$. By the same argument, $H^k(D,\mathcal{O}_D(2F_1-F_2))$ vanishes for all $k \in \mathbb{Z}$. Let $X$ be the product of $C$ and $D$. By abuse of notation, we let $\mathcal{O}_X(E_i)$ (respectively, $\mathcal{O}_X(F_i)$) for $i \in \{1,2\}$ denote the pullback of $ \mathcal{O}_C(E_i) $ (respectively, $ \mathcal{O}_D(F_i)$). For any character $\chi \in Hom(G,\mathbb{C}^*)$, we can identify the equivariant line bundles $\mathcal{O}_X(E_i)(\chi)$ (respectively, $ \mathcal{O}_X(F_i)(\chi) $) on $X$ with line bundles on $S$. Then for any choice of four characters $\chi_1, \chi_2, \chi_3, \chi_4 \in Hom(G,\mathbb{C}^*)$, we get the following sequence of line bundles on $S$. $$\mathcal{O}_X(\chi_1), \mathcal{O}_X(E_2-2E_1)(\chi_2), \mathcal{O}_X(F_2-2F_1)(\chi_3), \mathcal{O}_X(E_2-2E_1+F_2-2F_1)(\chi_4).$$
By the K\"{u}nneth formula, we find that the above sequence is an exceptional sequence. Since the rank of the Grothendieck group of $S$ is 4, we see that the above sequence is of maximal length.

We also compute Hochschild cohomologies of quasiphantom categories
and prove that for some exceptional sequences we obtained the
categories generated by those exceptional sequences are defomation
invariant. While adding these results to this paper which was on the
arXiv, similar results have been obtained independently by Coughlan in
\cite{C} via different method. In his paper \cite{C}, Coughlan considers
general type surfaces which are obtained as abelian covers of del
Pezzo surfaces satisfying some conditions. His method can be applied to surfaces isogenous to a higher product with $G=(\mathbb{Z}/3)^2$, $G=(\mathbb{Z}/5)^2$ and many other general type surfaces. He constructs many
exceptional sequences of maximal lengths on these surfaces and studies deformation invariance and Hochschild cohomologies.

This paper is organized as follows. In $\S$2, we collect some basic
facts about the surfaces isogenous to a higher product and compute
the Grothendieck groups of these surfaces. In $\S$3, we construct
exceptional sequences of line bundles on the $2$-dimensional family
of surfaces isogenous to a higher product with $p_g=q=0$,
$G=(\mathbb{Z}/3)^2$. In $\S$4, we discuss quasiphantom and phantom
categories. In $\S$5, we consider $G=(\mathbb{Z}/2)^3$,
$G=(\mathbb{Z}/2)^4$ cases.

\begin{acknowledgement}
I am grateful to my advisor Young-Hoon Kiem for his invaluable
advice and many suggestions for the first draft of this paper.
Without his support and encouragement, this work could not have been
accomplished. I thank Fabrizio Catanese for answering my questions and sending me corrections of \cite{BC}. I would like to thank Seoul National University for its support during the preparation of this paper.
\end{acknowledgement}

\begin{notations}
We will work over $\mathbb{C}$. A curve will mean a smooth projective curve. A surface will mean a smooth projective surface. Derived category of a variety will mean the bounded derived category of coherent sheaves on that variety. $G$ denotes a finite group and $\widehat{G}=Hom(G,\mathbb{C}^*)$ denotes the character group of $G$. Here $ \sim $ denotes linear equivalence of divisors.
\end{notations}

\section{Preliminaries}

In this section we recall the definition and some basic facts about surfaces isogenous to a higher product. For details, see {\cite{BC}}.

\begin{defi} A surface $S$ is called isogenous to a higher product if $S=(C \times D)/G$ where $C$, $D$ are curves with genus at least $2$ and $G$ is a finite group acting freely on $C \times D$. When $G$ acts via a product action, $S$ is called of unmixed type.
\end{defi}

\begin{rmk} \cite{BC} Let $S$ be a surface isogenous to a higher product of unmixed type. Then $S$ is a surface of general type. When $p_g=q=0$, elementary computations show that $K_S^2=8$, $C/G \cong D/G \cong \mathbb{P}^1$ and $|G|=(g_C-1)(g_D-1)$ where $g_C$ and $g_D$ denote the genus of $C$ and $D$, respectively.
\end{rmk}

When $G$ is an abelian group, Bauer and Catanese proved that there are $4$ types of surfaces isogenous to a higher product of unmixed type with $p_g=q=0$. Moreover they computed the dimensions of the families they form in \cite{BC}.

\begin{thm} \cite{BC} Let $S$ be a surface isogenous to a higher product $(C \times D) /G$ of unmixed type with $p_g=q=0$. If $G$ is abelian, then $G$ is one of the following groups : \\
(1) $(\mathbb{Z}/2)^3$, and these surfaces form an irreducible connected component of dimension 5 in their moduli space; \\
(2) $(\mathbb{Z}/2)^4$, and these surfaces form an irreducible connected component of dimension 4 in their moduli space; \\
(3) $(\mathbb{Z}/3)^2$, and these surfaces form an irreducible connected component of dimension 2 in their moduli space; \\
(4) $(\mathbb{Z}/5)^2$, and $S$ is the Beauville surface.
\end{thm}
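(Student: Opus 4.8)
The plan is to translate this classification into a combinatorial problem about abelian Galois covers of $\mathbb{P}^1$ and then cut down the possibilities by a Hurwitz-type finiteness bound. Since $q=0$ forces $C/G\cong D/G\cong\mathbb{P}^1$ (as in the Remark), both $C\to C/G=\mathbb{P}^1$ and $D\to D/G=\mathbb{P}^1$ are Galois covers with abelian group $G$. Such a cover is determined up to isomorphism by its branch locus together with a \emph{generating vector}: for $C$, an unordered tuple $(g_1,\dots,g_r)$ of nonzero elements of $G$ that generate $G$ and satisfy $g_1+\cdots+g_r=0$, where $g_i$ is the local monodromy at the $i$-th branch point and has order $m_i$. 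I would first record the two numerical inputs. Riemann--Hurwitz gives
$$2g_C-2=|G|\Bigl(-2+\sum_{i=1}^{r}\bigl(1-\tfrac{1}{m_i}\bigr)\Bigr),$$
and together with $|G|=(g_C-1)(g_D-1)$ this yields, writing $\theta_C:=-2+\sum_i(1-1/m_i)$ and similarly $\theta_D$,
$$\theta_C=\frac{2}{g_D-1},\qquad \theta_D=\frac{2}{g_C-1},\qquad \theta_C\,\theta_D=\frac{4}{|G|}.$$
In particular $0<\theta_C,\theta_D\le 2$.

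Next I would encode the two remaining hypotheses. Freeness of the product action of $G$ on $C\times D$ is equivalent to the statement that no nontrivial element of $G$ has fixed points on both factors; since the stabilizers of the $C$-cover are the cyclic subgroups $\langle g_i\rangle$ and those of the $D$-cover are $\langle h_j\rangle$, this becomes the disjointness condition
$$\Bigl(\bigcup_i\langle g_i\rangle\Bigr)\cap\Bigl(\bigcup_j\langle h_j\rangle\Bigr)=\{0\}.$$
For the condition $p_g=0$ I would use the Chevalley--Weil formula: decomposing $H^0(C,\Omega^1_C)=\bigoplus_{\chi}V_\chi$ and $H^0(D,\Omega^1_D)=\bigoplus_{\chi}W_\chi$ into $\widehat{G}$-eigenspaces, one has $p_g=\sum_{\chi\ne 1}\dim V_\chi\cdot\dim W_{\chi^{-1}}$, so that $p_g=0$ forces $V_\chi=0$ or $W_{\chi^{-1}}=0$ for every nontrivial character $\chi$, the dimensions $\dim V_\chi$ being explicitly computable from the generating vector.

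The heart of the argument is a finiteness bound. Because each $m_i$ divides $\exp(G)$ and the $g_i$ generate $G$, the quantity $\theta_C$ cannot be an arbitrarily small positive number; the admissible signatures $(m_1,\dots,m_r)$ with $\theta_C$ small are very restricted (this is the familiar phenomenon behind the discreteness of hyperbolic triangle and quadrilateral groups). Combining such a lower bound for $\theta_C$ and $\theta_D$ with $\theta_C\theta_D=4/|G|$ gives an explicit upper bound on $|G|$, after which only finitely many abelian groups remain. I would then run a case analysis, for each surviving group enumerating the generating vectors compatible with the sum-zero and generation constraints and discarding those violating the disjointness (freeness) or eigenspace ($p_g=0$) conditions. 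This should leave exactly $G\in\{(\mathbb{Z}/2)^3,(\mathbb{Z}/2)^4,(\mathbb{Z}/3)^2,(\mathbb{Z}/5)^2\}$, with the ramification forced to be: all $m_i=2$ and $(r,s)=(6,5)$ up to interchanging the factors for $(\mathbb{Z}/2)^3$; all $m_i=2$ and $r=s=5$ for $(\mathbb{Z}/2)^4$; all $m_i=3$ and $r=s=4$ for $(\mathbb{Z}/3)^2$; and all $m_i=5$ and $r=s=3$ for $(\mathbb{Z}/5)^2$.

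Finally I would compute the dimensions. Once the ramification type is fixed, the moduli of $S$ is governed by the moduli of the two branched lines, namely configurations of $r$ (resp. $s$) distinct points on $\mathbb{P}^1$ modulo $\mathrm{PGL}_2$, of dimension $(r-3)+(s-3)=r+s-6$; this yields $5,4,2,0$ respectively, matching the statement. Irreducibility reduces to showing that for each group the valid generating vector is unique up to $\mathrm{Aut}(G)$ and the braid action on the branch points, so that the associated Hurwitz space is connected. I expect the main obstacle to be precisely this combinatorial classification: producing a clean Hurwitz-type bound on $|G|$ strong enough to leave only finitely many groups, and then verifying for each candidate that the freeness and $p_g=0$ constraints single out a unique irreducible family. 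The freeness check, being a disjointness of unions of cyclic subgroups, is the most delicate point, since it is what ultimately excludes the remaining abelian groups and pins down the two generating vectors.
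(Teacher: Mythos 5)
Two preliminary remarks before the substance. First, the paper you were given does not prove this statement at all: it is quoted verbatim from Bauer--Catanese \cite{BC}, so there is no internal proof to compare against, and your proposal has to be judged as a reconstruction of the argument in \cite{BC}. Second, your framework is in fact the correct one, and it is essentially the framework used there: since $q=0$, both $C\to C/G$ and $D\to D/G$ are abelian Galois covers of $\mathbb{P}^1$ determined by generating vectors; Riemann--Hurwitz plus $|G|=(g_C-1)(g_D-1)$ gives $\theta_C\theta_D=4/|G|$; freeness of the product action is exactly the disjointness $\bigl(\bigcup_i\langle g_i\rangle\bigr)\cap\bigl(\bigcup_j\langle h_j\rangle\bigr)=\{0\}$; and the dimension of each family is $(r-3)+(s-3)$. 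Your consistency checks are also right: the signatures are $(2^5;2^6)$, $(2^5;2^5)$, $(3^4;3^4)$, $(5^3;5^3)$, giving dimensions $5,4,2,0$. One small logical point: given $q=0$ and a free unmixed action, $p_g=0$ is \emph{equivalent} to $|G|=(g_C-1)(g_D-1)$ (both say $\chi(\mathcal{O}_S)=1$), so the Chevalley--Weil eigenspace condition you list is not an independent constraint; it is a computational reformulation, not extra cutting power.

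The genuine gaps are two. (1) The heart of the theorem is the elimination of every other abelian group, and this is precisely the step you defer. The only finiteness mechanism you actually propose is a generic Hurwitz-type lower bound on $\theta$; the uniform bound $\theta\ge 1/42$ yields only $|G|\le 4\cdot 42^2=7056$, after which ``run a case analysis'' over all abelian groups of bounded order and all signature pairs is a program, not a proof. What makes the classification feasible (and what \cite{BC} supplies) is abelian-specific structure: sharper bounds for abelian actions on curves, and the tension between generation and disjointness --- both $\Sigma_C=\bigcup_i\langle g_i\rangle$ and $\Sigma_D=\bigcup_j\langle h_j\rangle$ must generate $G$ yet satisfy $|\Sigma_C|+|\Sigma_D|\le |G|+1$ --- which is what actually kills candidates such as cyclic groups or $(\mathbb{Z}/2)^2\times\mathbb{Z}/4$. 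As written, nothing excludes any group. (2) The assertion ``irreducible connected \emph{component} of the moduli space'' does not reduce to connectedness of a Hurwitz space. Uniqueness of the pair of generating vectors up to $\mathrm{Aut}(G)$ and braid moves gives an irreducible family of the stated dimension, but to conclude that this family is open and closed in the moduli space of surfaces of general type you need Catanese's theorem that being isogenous to a (higher) product of curves is a deformation-invariant condition, i.e.\ any surface with the same topological data is again such a quotient $(C'\times D')/G$. Your plan never addresses why a small deformation of $S$ cannot leave the family, so even granting the group classification you would only obtain an irreducible subvariety, not a component.
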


In \cite{BC}, the authors also computed the first homology groups of these surfaces. Recently Shabalin \cite{Sh} corrected their computation and Bauer and Catanese corrected their mistake using Magma.

\begin{thm} \cite{BC2}, \cite{BCF}, \cite{Sh} Let $S$ be a surface isogenous to a higher product $(C \times D)/G$ of unmixed type with $p_g=q=0$ and assume $G$ to be abelian. Then we have the following : \\
(1) $H_1(S,\mathbb{Z}) \cong (\mathbb{Z}/2)^4 \oplus (\mathbb{Z}/4)^2$ for $G=(\mathbb{Z}/2)^3$; \\
(2) $H_1(S,\mathbb{Z}) \cong (\mathbb{Z}/4)^4$ for $G=(\mathbb{Z}/2)^4$; \\
(3) $H_1(S,\mathbb{Z}) \cong (\mathbb{Z}/3)^5$ for $G=(\mathbb{Z}/3)^2$; \\
(4) $H_1(S,\mathbb{Z}) \cong (\mathbb{Z}/5)^3$ for $G=(\mathbb{Z}/5)^2$.
\end{thm}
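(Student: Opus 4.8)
The plan is to compute $H_1(S,\mathbb{Z})$ as the abelianization of $\pi_1(S)$, exploiting that $G$ acts freely on $C\times D$. First I would record the basic topological input: since the action is free, the quotient map $C\times D\to S$ is an unramified Galois covering with group $G$, so $\pi_1(S)$ sits in a short exact sequence
\[
1\longrightarrow \pi_1(C)\times\pi_1(D)\longrightarrow \pi_1(S)\longrightarrow G\longrightarrow 1,
\]
using $\pi_1(C\times D)=\pi_1(C)\times\pi_1(D)$. Thus $H_1(S,\mathbb{Z})=\pi_1(S)^{\mathrm{ab}}$ is a finite group (the surface groups contribute torsion after imposing the $G$-relations), and the whole problem is group-theoretic.

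Next I would identify $\pi_1(S)$ concretely. Because $S$ is of unmixed type, the projections induce $G$-covers $C\to C/G\cong\mathbb{P}^1$ and $D\to D/G\cong\mathbb{P}^1$, each branched over finitely many points. Each such cover is classified by a surjection from the orbifold fundamental group of the base, $\mathbb{T}_C=\langle c_1,\dots,c_r\mid \prod_i c_i=1,\ c_i^{m_i}=1\rangle\twoheadrightarrow G$ (and similarly $\mathbb{T}_D\twoheadrightarrow G$), whose kernel is the surface group $\pi_1(C)$ (resp. $\pi_1(D)$). The data of these surjections is the pair of generating vectors (spherical systems of generators) cutting out the given connected component of the moduli space; for $G=(\mathbb{Z}/3)^2$ both vectors have type $(3,3,3,3)$, consistent with the Riemann--Hurwitz computation $g_C=g_D=4$ and $(g_C-1)(g_D-1)=9=|G|$. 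The fundamental group of $S$ is then the fiber product
\[
\pi_1(S)\cong \mathbb{T}_C\times_G\mathbb{T}_D=\{(x,y)\in\mathbb{T}_C\times\mathbb{T}_D:\bar{x}=\bar{y}\text{ in }G\},
\]
whose projection to $G$ has kernel $\pi_1(C)\times\pi_1(D)$, recovering the extension above.

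With this presentation in hand, the computation of $H_1(S,\mathbb{Z})=\pi_1(S)^{\mathrm{ab}}$ proceeds by abelianizing the fiber product. Concretely I would list the generators of $\mathbb{T}_C$ and $\mathbb{T}_D$ together with the fiber-product condition and the torsion relations $c_i^{m_i}=1$, pass to the abelianization, form the resulting integer relation matrix, and read off the invariant factors via its Smith normal form. Doing this for each of the four explicit generating-vector systems produces the four groups claimed in (1)--(4).

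The main obstacle is precisely this last bookkeeping step: the torsion is extremely sensitive to the choice of generating vectors, and a single wrong entry changes the answer. This is exactly where the original computation in \cite{BC} went astray and was corrected by Shabalin \cite{Sh}, after which the corrected invariants were confirmed by Bauer and Catanese through a direct machine computation in Magma. Accordingly, rather than trusting a closed-form shortcut, I would carry out the abelianization for each of the four explicit vector systems and cross-check the invariant factors, or verify them computationally, treating the delicate combinatorics of the generating vectors as the true content of the statement.
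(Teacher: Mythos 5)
Your setup matches the method of the sources this theorem is actually taken from --- note that the paper itself gives no proof at all: the statement is imported from \cite{BC2}, \cite{BCF}, \cite{Sh}. Since $G$ acts freely, $\pi_1(S)$ is an extension of $G$ by $\pi_1(C)\times\pi_1(D)$, and by Catanese's structure theorem for unmixed surfaces isogenous to a product it is the fiber product $\mathbb{T}_C\times_G\mathbb{T}_D$ of the two orbifold groups over $G$; the four answers are the abelianizations of these fiber products for the generating vectors listed in \cite{BC}. But as a proof your text has a genuine gap: it stops exactly where the content of the theorem begins. No generating vectors are written down, no presentation of $\pi_1(S)$ is produced, no relation matrix is formed, and no invariant factors are computed --- the final paragraph says you ``would'' carry this out or verify it computationally. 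For this particular statement that is not a formality: the published computation of precisely this kind in \cite{BC} was wrong, and the theorem exists only because Shabalin and Bauer--Catanese--Frapporti redid the computation and corrected it. A proposal that defers the computation establishes nothing here.

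There is also a methodological flaw in the one step you do describe. You propose to ``list the generators of $\mathbb{T}_C$ and $\mathbb{T}_D$ together with the fiber-product condition and the torsion relations,'' abelianize, and take the Smith normal form of the resulting matrix. But the fiber product $\mathbb{T}_C\times_G\mathbb{T}_D$ is a finite-index \emph{subgroup} of $\mathbb{T}_C\times\mathbb{T}_D$ (for abelian $G$, the kernel of $(x,y)\mapsto\phi_C(x)-\phi_D(y)$), not a quotient of it; the generators $c_i$, $d_j$ of the orbifold groups do not even lie in it, and the ``fiber-product condition'' is a subgroup condition, not a relation one can impose. Abelianization does not commute with passing to finite-index subgroups, so the matrix you describe would compute $H_1$ of some quotient of $\mathbb{T}_C\times\mathbb{T}_D$, not $H_1(S,\mathbb{Z})=\bigl(\mathbb{T}_C\times_G\mathbb{T}_D\bigr)^{\mathrm{ab}}$. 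What is actually required is a presentation of the index-$|G|$ subgroup itself, obtained by Reidemeister--Schreier rewriting with respect to a transversal of $G$ (equivalently, by computing the homology of the corresponding finite cover), and that rewriting is exactly the delicate step where the original computation went astray and where Magma was ultimately invoked. So the missing ingredient is not merely stamina: the algorithm as you state it would return the wrong group.
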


\begin{rmk}
Let $S$ be a surface with $p_g=q=0$ isogenous to a higher product $(C \times D)/G$ of unmixed type and let $G$ be abelian. From the exponential sequence \[ 0 \to \mathbb{Z} \to \mathcal{O} \to \mathcal{O}^* \to 0 \] we get \[ Pic(S) \cong H^2(S,\mathbb{Z}). \]

The above theorem and Noether's formula \[ \chi(\mathcal{O}_X) = 1 = \frac{1}{12}(8+2b_0-2b_1+b_2)=\frac{1}{12}(K_S^2 + \chi_{top}(S)) \] imply that these surfaces have $b_2=2$.

Finally the above theorem and the universal coefficient theorem imply the following : \\
(1) $Pic(S) \cong H^2(S,\mathbb{Z}) \cong \mathbb{Z}^2 \oplus (\mathbb{Z}/2)^4 \oplus (\mathbb{Z}/4)^2$ for $G=(\mathbb{Z}/2)^3$; \\
(2) $Pic(S) \cong H^2(S,\mathbb{Z}) \cong \mathbb{Z}^2 \oplus (\mathbb{Z}/4)^4$ for $G=(\mathbb{Z}/2)^4$; \\
(3) $Pic(S) \cong H^2(S,\mathbb{Z}) \cong \mathbb{Z}^2 \oplus (\mathbb{Z}/3)^5$ for $G=(\mathbb{Z}/3)^2$; \\
(4) $Pic(S) \cong H^2(S,\mathbb{Z}) \cong \mathbb{Z}^2 \oplus (\mathbb{Z}/5)^3$ for $G=(\mathbb{Z}/5)^2$.
\end{rmk}

Now we compute the Grothendieck groups of these surfaces. We will follow the arguments in {\cite[Proposition 2.1]{BBS}} and {\cite[Lemma 2.7]{GS}}.

\begin{lem} Let $S$ be a surface with $p_g=q=0$ isogenous to a higher product $(C \times D)/G$ of unmixed type and let $G$ be abelian. Then
\[ K(S) \cong \mathbb{Z}^2 \oplus Pic(S). \]
\end{lem}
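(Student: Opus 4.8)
The plan is to analyze $K(S)$ through the filtration by codimension of support, exactly in the spirit of \cite[Proposition 2.1]{BBS} and \cite[Lemma 2.7]{GS}. Let $F^p \subseteq K(S)$ be the subgroup generated by classes of coherent sheaves whose support has codimension at least $p$, giving
\[ 0 \subseteq F^2 \subseteq F^1 \subseteq F^0 = K(S). \]
For a smooth projective surface the graded pieces are computed by the cycle class maps, and I would simply recall these standard identifications: the rank gives $F^0/F^1 \cong \mathbb{Z}$; the determinant $\det \colon F^1 \to Pic(S)$ kills $F^2$ and induces an isomorphism $F^1/F^2 \cong Pic(S) = CH^1(S)$; and sending a reduced point $p$ to $[\mathcal{O}_p]$ gives an isomorphism $CH_0(S) = CH^2(S) \cong F^2$. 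Thus the lemma reduces to two tasks: (i) computing $CH_0(S)$, and (ii) showing the two resulting extensions split.

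For (i) I claim $CH_0(S) \cong \mathbb{Z}$, and I expect this to be the real content of the lemma; once $F^2 \cong \mathbb{Z}$ is known the rest is formal. Since $S = (C \times D)/G$ is an \'etale quotient of a product of curves, its Chow motive is the $G$-invariant summand of $h(C) \otimes h(D)$ and hence finite dimensional in the sense of Kimura--O'Sullivan. Combined with $p_g(S)=0$, this forces the transcendental part of $CH_0(S)$ to vanish, i.e. Bloch's conjecture holds for $S$. As $q(S)=0$ the Albanese variety is trivial, so $CH_0(S)_{\deg 0} = 0$ and the degree map yields $CH_0(S) \cong \mathbb{Z}$. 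I regard verifying Bloch's conjecture (here, citing finite dimensionality of the motive) as the main obstacle; everything else is elementary algebra.

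It remains to split the filtration. The extension $0 \to F^1 \to K(S) \to \mathbb{Z} \to 0$ splits at once via $1 \mapsto [\mathcal{O}_S]$, so $K(S) \cong \mathbb{Z} \oplus F^1$. For $0 \to F^2 \to F^1 \to Pic(S) \to 0$ with $F^2 \cong \mathbb{Z}$, the obstruction lies in $\mathrm{Ext}^1(Pic(S),\mathbb{Z})$; writing $Pic(S) \cong \mathbb{Z}^2 \oplus T$ with $T$ finite (from the Remark above) this group is $\mathrm{Ext}^1(T,\mathbb{Z}) \cong T$, so splitting is equivalent to lifting the torsion of $Pic(S)$ to torsion in $F^1$. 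Given a torsion line bundle $L$ with $L^{\otimes n} \cong \mathcal{O}_S$, set $\alpha = [L]-[\mathcal{O}_S] \in F^1$; then $\det(\alpha)=L$, and since $\det(n\alpha) = L^{\otimes n} = \mathcal{O}_S$ we have $n\alpha \in F^2$. As $F^2 \cong \mathbb{Z}$ is torsion free, to conclude $n\alpha = 0$ it suffices to check it rationally, where the only invariant on $F^2$ is $\mathrm{ch}_2$, and $\mathrm{ch}_2(n\alpha) = \tfrac{n}{2}(L\cdot L) = 0$ because torsion classes have vanishing self-intersection. The same computation applied to $([L]-[\mathcal{O}_S])([M]-[\mathcal{O}_S]) \in F^2$ shows $L \mapsto \alpha$ is additive on $T$, hence a homomorphic section there. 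Therefore $T$ lifts to torsion in $F^1$, the extension splits, $F^1 \cong \mathbb{Z} \oplus Pic(S)$, and combining the two splittings gives $K(S) \cong \mathbb{Z}^2 \oplus Pic(S)$.
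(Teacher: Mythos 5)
Your proposal is correct, and it follows the paper's overall skeleton — the topological filtration $F^2 \subset F^1 \subset F^0 = K(S)$, the standard identifications of the graded pieces, and Bloch's conjecture for $S$ via Kimura finite-dimensionality of the motive (the paper cites \cite{Ki}, \cite{BCP} for exactly this) — but your argument for splitting the key extension $0 \to F^2 \to F^1 \to Pic(S) \to 0$ is genuinely different. The paper splits it by a retraction: since $\chi(\mathcal{O}_S,\mathcal{O}_p)=1$ by adjunction, the Euler form with $[\mathcal{O}_S]$ is a linear functional on $F^1$ taking the value $1$ on the generator $[\mathcal{O}_p]$ of $F^2\cong\mathbb{Z}$, so $[\mathcal{O}_p]$ is primitive and $F^2$ is a direct summand — a one-line argument that needs no information about $Pic(S)$ beyond the filtration itself. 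You instead compute the obstruction in $\mathrm{Ext}^1(Pic(S),\mathbb{Z})\cong\mathrm{Ext}^1(T,\mathbb{Z})$ and kill it by exhibiting a homomorphic section over the torsion $T$, namely $L\mapsto[L]-[\mathcal{O}_S]$, checking via $\mathrm{ch}_2$ and the rational vanishing of products of torsion divisor classes that these elements are torsion in $F^1$ and additive. This costs more work and uses the finite generation of $Pic(S)$ (from the Remark), but it buys a finer conclusion — an explicit, canonical lifting of $Pic(S)_{tors}$ to torsion classes in $K(S)$ — whereas the paper's retraction argument is shorter and more portable, since it only needs $CH^2(S)\cong\mathbb{Z}$ and the Euler pairing. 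Both hinge on the same non-formal input (Bloch's conjecture, making $F^2$ torsion-free of rank one), so the proofs are equally rigorous; yours is simply the "section" side and the paper's the "retraction" side of the same splitting problem.
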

\begin{proof}
We have the following standard isomorphisms
\[ F^0K(S)/F^1K(S) \cong CH^0(S) \cong \mathbb{Z}, \]
\[ F^1K(S)/F^2K(S) \cong Pic(S), \]
\[ F^2K(S) \cong CH^2(S). \]
From the result of Kimura {\cite{Ki}} (see also {\cite{BCP}}), we know that Bloch's conjecture holds for $S$, i.e. $ CH^2(S) \cong \mathbb{Z}.$ Therefore we get the following short exact sequence \[ 0 \to \mathbb{Z} \to F^1K(S) \to Pic(S) \to 0. \] Let $[p]$ be an element in $CH^2(S)$ represented by a point $p$ in $S$. Then $[p]$ is a basis of $CH^2(S) \cong \mathbb{Z}$, and $[\mathcal{O}_p]$ is the image of $[p]$ in $F^1K(S)$. By adjuntion, we have $\chi(\mathcal{O}_X,\mathcal{O}_p)=1$. This implies that the torsion free part of $[\mathcal{O}_p]$ in $F^1K(S)$ is an element of a basis of the torsion free part of $F^1K(S)$. It follows that the above short exact sequence splits and we get $F^1K(S) \cong \mathbb{Z} \oplus Pic(S)$. From the short exact sequence
\[ 0 \to F^1K(S) \to K(S) \to \mathbb{Z} \to 0 \] we get $K(S) \cong \mathbb{Z}^2 \oplus Pic(S).$
\end{proof}

\section{Derived categories of surfaces isogenous to a higher product with $G=(\mathbb{Z}/3)^2$}

In $\S$3 and $\S$4, we consider the derived categories of surfaces isogenous to a higher product of unmixed type with $p_g=q=0$, $G=(\mathbb{Z}/3)^2$. We recall some basic notions to describe the derived category of algebraic variety.

\begin{defi}
(1) An object $E$ in a triangulated category $D$ is called exceptional if
\begin{displaymath}
Hom(E,E[i])=\left \{ {\begin{array}{ll} \mathbb{C} & \textrm{if $i=0$,} \\ 0 & \textrm{otherwise.} \end{array}}
\right.
\end{displaymath}
(2) A sequence $E_1, \cdots, E_n$ of exceptional objects is called an exceptional sequence if $$ Hom(E_i,E_j[k])=0, \forall i > j, \forall k. $$
\end{defi}

When $S$ is a surface with $p_g=q=0$, every line bundle on $S$ is an exceptional object in $D^b(S)$. In this paper we want to prove the following theorem.

\begin{thm} Let $S=(C \times D)/G$ be a surface isogenous to a higher product of unmixed type with $p_g=q=0$, $G=(\mathbb{Z}/3)^2$. There are exceptional sequences of line bundles of maximal length on $S$. The orthogonal complements of the admissible subcategories in the derived category of $S$ are quasiphantom categories whose Grothendieck groups are isomorphic to $(\mathbb{Z}/3)^5$.
\end{thm}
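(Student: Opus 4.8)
My plan is to route the entire construction through the equivariant equivalence $D^b(S)\cong D^b_G(X)$, where $X=C\times D$ and $G$ acts freely, so that computing morphisms between the candidate line bundles on $S$ reduces to taking $G$-invariants of cohomology on $X$, which the K\"unneth formula splits into a product of cohomologies on $C$ and on $D$. First I would pin down the geometry. Since $p_g=q=0$ forces $|G|=(g_C-1)(g_D-1)=9$ and $C/G\cong D/G\cong\mathbb{P}^1$, a Riemann--Hurwitz computation for the Galois cover $\pi\colon C\to\mathbb{P}^1$ with group $(\mathbb{Z}/3)^2$ shows that $g_C=g_D=4$ and that $\pi$ is branched over exactly four points, each with a stabilizer of order $3$. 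The reduced fibre over a branch point is a $G$-invariant effective divisor of degree $3$, and conversely every such divisor arises this way, because the only $G$-orbits on $C$ of size at most $3$ are the three-point orbits over the branch locus (there are no global fixed points, as these would violate freeness on $C\times D$). The relation $3E_i\sim\pi^{\ast}(\mathrm{pt})\sim 3E_j$ shows that any two of these divisors differ by a nonzero $3$-torsion class in $\mathrm{Jac}(C)$, and analysing the four branch elements $g_1,\dots,g_4\in(\mathbb{Z}/3)^2$ with $\sum g_i=0$ then yields that they fall into exactly two linear equivalence classes $E_1,E_2$ (and likewise $F_1,F_2$ on $D$).

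The crux is the vanishing $H^k(C,\mathcal{O}_C(2E_1-E_2))=0$ for all $k$. Here $2E_1-E_2$ has degree $3$ on a genus-$4$ curve, so Riemann--Roch gives $\chi=0$ and it suffices to prove non-effectivity, i.e.\ $h^0=0$. Writing $2E_1-E_2=E_1+\eta$ with $\eta=E_1-E_2$ a nonzero $3$-torsion class, I would linearize $\mathcal{O}_C(E_1)$ with respect to $G$, decompose $H^0(C,\mathcal{O}_C(nE_1))$ into isotypic pieces for $\widehat{G}$, and identify the twist by $\eta$ with a shift of character; the vanishing then becomes the statement that one specific isotypic component is zero, which can be read off from $\pi_{\ast}\mathcal{O}_C=\bigoplus_{\chi}L_\chi$ on $\mathbb{P}^1$ together with the branch data. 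This is the step I expect to be the main obstacle, since it is the one genuinely geometric input and it depends on the special position of the four branch divisors; the analogous statement on $D$ is identical.

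Granting the vanishing, I would assemble the sequence. Pulling the $E_i,F_i$ back to $X$, twisting by arbitrary characters, and viewing the results as bundles $L_1,\dots,L_4$ on $S$, I compute $\mathrm{Hom}^{\bullet}_S(L_i,L_j)=\bigl(H^{\bullet}(X,L_j\otimes L_i^{-1})\bigr)^G$. For every pair $i>j$, the underlying divisor of $L_j\otimes L_i^{-1}$ has either its $C$-component equal to $2E_1-E_2$ or its $D$-component equal to $2F_1-F_2$, so by K\"unneth one tensor factor vanishes and the whole group is zero; the diagonal groups equal $\mathbb{C}$ in degree $0$ because $p_g=q=0$. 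Hence the four bundles form an exceptional sequence for any choice of characters. Since $K(S)\cong\mathbb{Z}^2\oplus\mathrm{Pic}(S)$ has rank $4$ by the lemma above, and exceptional classes are linearly independent in $K(S)\otimes\mathbb{Q}$, the sequence has maximal length.

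Finally, let $\mathcal{A}$ be the admissible subcategory generated by the $L_i$ and $\mathcal{A}^{\perp}$ its orthogonal complement, giving a semiorthogonal decomposition of $D^b(S)$. Additivity of Hochschild homology, combined with the Hochschild--Kostant--Rosenberg computation $HH_{\bullet}(S)=\mathbb{C}^4$ concentrated in degree $0$ (from $h^{0,0}=h^{2,2}=1$, $h^{1,1}=b_2=2$, and all other Hodge numbers zero), forces $HH_{\bullet}(\mathcal{A}^{\perp})=0$, since the four exceptional objects already contribute $HH_{\bullet}(\mathcal{A})=\mathbb{C}^4$ in degree $0$. Additivity of $K_0$, together with $K(S)\cong\mathbb{Z}^4\oplus(\mathbb{Z}/3)^5$ and the fact that the four exceptional classes project to a basis of the free rank-$4$ summand (as one checks from their first Chern classes and Euler characteristics), gives $K_0(\mathcal{A}^{\perp})\cong(\mathbb{Z}/3)^5$. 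Thus $\mathcal{A}^{\perp}$ is a quasiphantom category with Grothendieck group $(\mathbb{Z}/3)^5$, as asserted.
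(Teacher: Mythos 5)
Your overall architecture---the identification of sheaves on $S$ with $G$-equivariant sheaves on $X=C\times D$, the K\"unneth reduction to curve cohomology, maximality via the rank of $K(S)\cong\mathbb{Z}^4\oplus(\mathbb{Z}/3)^5$, and the additivity of $K_0$ and of Hochschild homology plus Hochschild--Kostant--Rosenberg for the quasiphantom claim---is exactly the paper's, and those parts of your proposal are sound. The genuine gap sits at precisely the point you yourself flag as ``the main obstacle'': the vanishing $H^0(C,\mathcal{O}_C(2E_1-E_2))=0$. You do not prove it; you only propose a strategy (decompose $H^0(C,\mathcal{O}_C(nE_1))$ into $\widehat{G}$-isotypic pieces, identify the twist by the torsion class $\eta=E_1-E_2$ with a character shift, and read the answer off from $\pi_*\mathcal{O}_C=\bigoplus_{\chi}L_\chi$ and the branch data) and defer its execution. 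Since this single vanishing is the entire geometric content of the theorem---every $\mathrm{Hom}$ and $\mathrm{Ext}$ vanishing needed for exceptionality reduces to it via K\"unneth---the theorem is not proved. The same problem occurs one step earlier: your claim that the four ramification orbits fall into \emph{exactly two} linear equivalence classes is asserted (``analysing the four branch elements\dots yields'') but never argued, and your observation that the differences $E_i-E_j$ are $3$-torsion can never by itself show that two such orbits are \emph{in}equivalent.

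The paper closes both gaps with inputs your sketch lacks, and in a way that makes the ``hard'' step soft. For the classification it uses (a) Dolgachev's computation giving $Div(C)^G/{\sim}\;\cong\mathbb{Z}\oplus\mathbb{Z}/3$, which rules out all four orbits being equivalent, and (b) the intermediate quotients $C/\langle e_1\rangle\cong C/\langle e_2\rangle\cong\mathbb{P}^1$: the two orbits on which $\langle e_1\rangle$ acts freely are pullbacks of points under $C\to C/\langle e_1\rangle$, hence linearly equivalent, and similarly for $\langle e_2\rangle$, giving $E_1\sim E_3$, $E_2\sim E_4$ and therefore exactly two classes. With this in hand the vanishing needs no isotypic computation at all: if $h^0(C,\mathcal{O}_C(2E_1-E_2))\neq 0$, then since $G$ is abelian there is a $G$-eigensection $f$, so $2E_1-E_2+(f)$ is a $G$-invariant \emph{effective} divisor of degree $3$, hence linearly equivalent to $E_1$ or $E_2$ by the classification; either relation, combined with $3(E_1-E_2)\sim 0$, forces $E_1\sim E_2$, a contradiction. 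So the lemma you left unproved is exactly the tool that turns your ``main obstacle'' into a three-line argument. To repair the proposal you must (i) actually establish the two-class statement, e.g.\ via the exact sequence $0\to\widehat{G}\to Pic^G(C)\to Div(C)^G/{\sim}\to 0$ and Dolgachev's formula, and (ii) either carry out your proposed branch-data computation in full or replace it by the eigensection argument above.
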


We will construct exceptional sequences of line bundles of maximal length using $G$-equivariant line bundles on $C \times D$. For this we study the equivariant geometry of $C$ and $D$.

\subsection{Equivariant geometry of $C$}

From {\cite{BC}}, we see that $C$ is a curve with genus $4$. The group $G$ acts on $C$ and let $\pi : C \to \mathbb{P}^1$ be the quotient map. There are $4$ branch points on $\mathbb{P}^1$ and $4$ orbits on $C$ where the $G$ action has nontrivial stabilizers. Let $E_1, E_2, E_3, E_4$ be the set-theoretic orbits of ramification points.

Let $X$ be a smooth projective variety and let $G$ be a finite group acting on $X$. There is a well-known exact sequence
\[ 0 \to \widehat{G} \to Pic^G(X) \to Pic(X)^G \to H^2(G,\mathbb{C}^*), \]
and the last homomorphism is surjective when $X$ is a curve (see {\cite{D2}}).

When $G$ is abelian, Galkin and Shinder proved the following lemma.

\begin{lem}{\cite[Lemma 2.1]{GS}}
Let $G$ be a finite abelian group. Then the image of $Pic^G(X)$ in $Pic(X)^G$ consists of equivalence classes of $G$-invariant divisors and there is a short exact sequence \[ 0 \to \widehat{G} \to Pic^G(X) \to Div(X)^G/{\sim} \to 0, \]
where $ \sim $ denotes the linear equivalence.
\end{lem}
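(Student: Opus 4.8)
The plan is to deduce the short exact sequence from the four-term exact sequence recalled just above by identifying the image of $Pic^G(X)$ in $Pic(X)^G$ with the subgroup of classes of $G$-invariant divisors. The kernel of $Pic^G(X) \to Pic(X)^G$ is exactly $\widehat{G}$ by the exactness already granted; concretely this kernel consists of the equivariant structures on the trivial bundle $\mathcal{O}_X$, which are classified by $Hom(G, H^0(X,\mathcal{O}_X^*)) = Hom(G,\mathbb{C}^*) = \widehat{G}$, using that $X$ is projective and connected so that its global units are the constants. Hence the asserted sequence $0 \to \widehat{G} \to Pic^G(X) \to Div(X)^G/{\sim} \to 0$ is automatic once the image is computed, and the entire content is the equality $\mathrm{Im}(Pic^G(X) \to Pic(X)^G) = \mathrm{Im}(Div(X)^G \to Pic(X))$, which I would establish by two inclusions.

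For the inclusion $\supseteq$, I start from a $G$-invariant divisor $D$ and produce an equivariant structure on $L = \mathcal{O}_X(D)$. This bundle carries its canonical rational section $s_D$ with $\mathrm{div}(s_D) = D$. For each $g \in G$ the isomorphisms $g^*L \xrightarrow{\sim} L$ form a $\mathbb{C}^*$-torsor, again because the global units are scalars; since $g^* s_D$ and $s_D$ have the same divisor $D$ by $G$-invariance, there is a unique such isomorphism $\lambda_g$ carrying $g^* s_D$ to $s_D$. The cocycle identity $\lambda_{gh} = \lambda_h \circ h^*\lambda_g$ then holds, because both sides send $(gh)^* s_D$ to $s_D$ and two isomorphisms of line bundles agreeing on one nonzero rational section coincide. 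Thus $L$ admits an equivariant structure and lies in the image of $Pic^G(X)$.

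For the inclusion $\subseteq$, I take a $G$-equivariant line bundle $L$ and seek a $G$-invariant divisor representing it. The one-dimensional $\mathbb{C}(X)$-vector space $V$ of rational sections of $L$ inherits a semilinear $G$-action from the equivariant structure, relative to the $G$-action on the field $\mathbb{C}(X)$. Because $G$ acts faithfully on the irreducible variety $X$, the extension $\mathbb{C}(X)/\mathbb{C}(X)^G$ is Galois with group $G$, so Hilbert's Theorem $90$ gives $H^1(G,\mathbb{C}(X)^*) = 0$; writing the action on a chosen nonzero section as a $1$-cocycle valued in $\mathbb{C}(X)^*$ and trivializing it produces a nonzero $G$-invariant rational section $s$. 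Its divisor $\mathrm{div}(s)$ is then $G$-invariant and satisfies $\mathcal{O}_X(\mathrm{div}(s)) \cong L$, which is the desired inclusion.

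The main obstacle is this last step: extracting a genuinely $G$-invariant rational section, which is where the Galois-descent input (Hilbert 90, or Speiser's theorem) enters and where faithfulness of the action is used. One must also be careful that $\sim$ here is honest linear equivalence on $X$—the trivializing function arising in the cocycle need not be $G$-invariant—so that $Div(X)^G/{\sim}$ is the image of the $G$-invariant divisors inside the ordinary Picard group, rather than some coarser purely equivariant quotient.
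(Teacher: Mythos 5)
Your proof is correct, but note that the paper contains no proof of this lemma at all: it is imported verbatim from \cite{GS} (Lemma 2.1 there), so the only meaningful comparison is with that source and with how the abelian hypothesis gets used elsewhere in this paper. Your route is genuinely different from the one the abelian hypothesis is tailored for. The representation-theoretic argument — the same technique this paper itself uses later, e.g.\ in proving $h^0(C,\mathcal{O}_C(2E_1-E_2))=0$ — twists a $G$-linearized bundle by a $G$-invariant ample divisor until it has sections, decomposes the space of global sections into characters (this is precisely where commutativity of $G$ enters, guaranteeing a one-dimensional subrepresentation), and takes the divisor of an eigensection, which is automatically $G$-invariant. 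You instead prove the nontrivial inclusion by Galois descent: the equivariant structure makes the one-dimensional $\mathbb{C}(X)$-vector space of rational sections of $L$ a semilinear $G$-module, and Hilbert 90 (Speiser's theorem) for $\mathbb{C}(X)/\mathbb{C}(X)^G$ produces a $G$-invariant rational section whose divisor is the required $G$-invariant representative; your easy inclusion (the canonical linearization of $\mathcal{O}_X(D)$ for $G$-invariant $D$, via uniqueness of the isomorphism matching canonical sections) and the identification of the kernel with $\widehat{G}$ are both fine. What your approach buys: commutativity of $G$ is never used, so you in fact prove the statement for an arbitrary finite group. What it costs: faithfulness of the action, which you correctly flag — it is what makes $\mathbb{C}(X)/\mathbb{C}(X)^G$ Galois with group exactly $G$, and it is not literally among the hypotheses of the lemma; this is harmless here, since every action in the paper ($G$ on $C$, on $D$, and on $C\times D$) is faithful, but a fully general proof would need a separate reduction of the non-faithful case, since then your cocycle lives in $Z^1$ of a group acting non-faithfully on $\mathbb{C}(X)^*$ and Hilbert 90 does not apply directly. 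One terminological quibble: the ``purely equivariant'' quotient of $Div(X)^G$ by divisors of $G$-invariant rational functions is a finer (larger) quotient than $Div(X)^G/{\sim}$, not a coarser one; the substance of your remark — that $\sim$ must be ordinary linear equivalence on $X$ for the sequence to be the asserted one — is correct.
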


Using the above exact sequences, we analyze the equivariant geometry
of $C$.

\begin{nota}
From now to $\S$4, we let $G=(\mathbb{Z}/3)^2$ and $S=(C \times D)/G$ where $C$ and $D$ are curves with genus at least $2$ on which $G$ acts such that the diagonal action of $G$ on $C \times D$ is free.
\end{nota}

\begin{lem} (1) $Div(C)^G/\sim ~ \cong \mathbb{Z} \oplus \mathbb{Z}/3.$ \\
(2) There are exactly two $G$-invariant effective divisors of degree $3$ on $C$ which are not linearly equivalent.
\end{lem}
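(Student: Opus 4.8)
The plan is to read off $Div(C)^G/\sim$ from the orbit structure of the $G$-action and then cut the torsion down using semi-invariant rational functions, i.e. the structure of $\pi\colon C\to\mathbb{P}^1$ as an abelian cover. First I would describe the $G$-invariant divisors concretely: every such divisor is a $\mathbb{Z}$-linear combination of $G$-orbits, and the orbits are of two kinds. The free orbits each have $9$ points and equal a fiber $\pi^*(p)$ for $p\in\mathbb{P}^1$ a non-branch point; the four ramification orbits $E_1,E_2,E_3,E_4$ each consist of $3$ points with stabilizer of order $3$, hence have degree $3$ and satisfy $\pi^*(b_i)=3E_i$. Since all points of $\mathbb{P}^1$ are linearly equivalent, so are all fibers, and therefore $3E_i\sim\pi^*(b_i)\sim\pi^*(b_j)\sim 3E_j$. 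Consequently $Div(C)^G/\sim$ is generated by the classes $[E_1],\dots,[E_4]$, subject at least to the relations $3(E_i-E_j)\sim 0$.

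For part (1) I would argue that \emph{every} further relation comes from a rational function $f$ with $G$-invariant divisor, which is then necessarily semi-invariant, $g^*f=\chi(g)f$ for some $\chi\in\widehat{G}$; its order along $E_i$ is congruent mod $3$ to the integer $a_{\chi,i}$ defined by $\chi(g_i)=\omega^{a_{\chi,i}}$, where $g_i$ is an inertia generator of $E_i$ and $\omega$ is a fixed primitive cube root of unity (existence of such $f$ comes from the Kummer, i.e. Chevalley--Weil, description of the cover). Encoding $\chi\mapsto a_\chi=(a_{\chi,i})_i$ gives an $\mathbb{F}_3$-linear map $\widehat{G}\to(\mathbb{Z}/3)^4$ which is injective because the $g_i$ generate $G$, so its image $R$ is $2$-dimensional; moreover $\sum_i a_{\chi,i}\equiv 0\pmod 3$ because $g_1g_2g_3g_4=1$, so $R$ lies in the hyperplane $V_0=\{\sum x_i=0\}$. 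Reducing mod $3$, the torsion of $Div(C)^G/\sim$ becomes $(\mathbb{Z}/3)^3/\overline{R}$, where $\overline{R}$ is the image of $R$ after deleting one coordinate; since $R\subseteq V_0$ this projection is injective, so the torsion is $(\mathbb{Z}/3)^{3-2}\cong\mathbb{Z}/3$. Splitting off the degree map, which is onto $\mathbb{Z}$ since $[E_i]\mapsto 1$ after dividing by $3$, then gives $Div(C)^G/\sim\,\cong\mathbb{Z}\oplus\mathbb{Z}/3$.

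For part (2), the $G$-invariant effective divisors of degree $3$ are exactly $E_1,E_2,E_3,E_4$, because any orbit has degree $3$ or $9$ and degree $3$ forces a single ramification orbit. All four have the same image in the free part $\mathbb{Z}$, so I only need to count them inside the torsion $\mathbb{Z}/3$. Dualizing the previous step, $[E_i]=[E_j]$ holds iff $e_i-e_j\in R$, and $R^\perp=\{y\in(\mathbb{Z}/3)^4:\sum_i y_ig_i=0\}$ is precisely the space of relations among the inertia generators; the number of classes then equals the number of distinct coordinates of a generator of $R^\perp$ complementary to $(1,1,1,1)$. At this point I would insert the Bauer--Catanese generating vector (four order-$3$ elements with product $1$ generating $G$). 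A short computation in $\mathbb{F}_3^2$ shows their stabilizer subgroups cannot all be distinct, and for the relevant vector they occur in two coincident pairs, so that $R^\perp=\langle(1,1,0,0),(0,0,1,1)\rangle$ up to reordering; this forces exactly two classes.

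The main obstacle is the final count in part (2): it is genuinely sensitive to the generating vector, since other admissible-looking vectors (for instance those whose inertia subgroups form a $2{:}1{:}1$ pattern) produce three classes rather than two. Thus the value $2$ is not formal and must be extracted from the explicit Bauer--Catanese data, equivalently from the fact that the four inertia subgroups split into two coincident pairs. The other delicate input is Step 2, where one must actually produce the semi-invariant functions realizing the relations, via the Kummer/abelian-cover presentation of $\pi\colon C\to\mathbb{P}^1$.
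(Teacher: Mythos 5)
Your proposal is correct, but it reaches the lemma by a genuinely different route than the paper. For part (1) the paper does essentially no computation: it cites Dolgachev's formula for the equivariant Picard group of a curve (\cite{D2}, Equation 2.2) and feeds it into the Galkin--Shinder exact sequence $0 \to \widehat{G} \to Pic^G(C) \to Div(C)^G/{\sim} \to 0$; your Kummer-theoretic computation of the relation lattice (relations among the $[E_i]$ $\leftrightarrow$ characters, torsion $\cong V_0/R \cong \mathbb{Z}/3$) in effect reproves that cited input in this special case. For part (2) the paper argues geometrically: with the Bauer--Catanese stabilizer data $e_1, e_2, -e_1, -e_2$, the intermediate quotient $C \to C/\langle e_1 \rangle \cong \mathbb{P}^1$ exhibits $E_2$ and $E_4$ as two fibres of the same map, so $E_2 \sim E_4$, and similarly $E_1 \sim E_3$; the remaining non-equivalence $E_1 \not\sim E_2$ is obtained indirectly, since all four orbits being equivalent would force $Div(C)^G/{\sim} \cong \mathbb{Z}$, contradicting (1). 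Your dual computation via $R^{\perp}$ (the relation space of the inertia generators) produces the same partition $\{E_1,E_3\}$, $\{E_2,E_4\}$ directly, and it isolates exactly what the count ``two'' depends on: as you note, a $2{:}1{:}1$ pattern of inertia subgroups would yield three classes, and the $2{:}2$ pattern supplied by \cite{BC} is what makes the answer two (it is forced by freeness of the diagonal action on $C \times D$, since the inertia subgroups of $C$ and of $D$ must be disjoint away from the identity and each set must generate $G$). What each approach buys: the paper's intermediate-quotient trick manufactures the needed semi-invariant functions for free, as pullbacks of rational functions on $\mathbb{P}^1$, keeping the proof short but outsourcing (1) to \cite{D2}; your argument is longer but self-contained, identifies the two equivalence classes explicitly, and makes the dependence on the branching data transparent. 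The two delicate points you flag --- existence of an eigenfunction for every character, and a uniform sign convention when reading $\mathrm{ord}_{E_i}(f) \bmod 3$ off from $\chi(g_i)$ --- are genuine but standard facts about abelian covers of $\mathbb{P}^1$, so they do not constitute gaps.
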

\begin{proof}
(1) follows from {\cite[Equation 2.2]{D2}} and the short exact sequence in the above lemma. From \cite{BC} we may
assume that the stabilizer elements of $E_1, E_2, E_3, E_4$ are
$e_1, e_2, -e_1, -e_2$, respectively, where $e_1, e_2$ are basis of
$G=(\mathbb{Z}/3)^2$. Consider $ \langle e_1 \rangle $-action on
$C$, and let $\phi : C \to \mathbb{P}^1$ be its quotient map. Then
we get $E_2 \sim E_4$ since each of them is a pullback of a point of
$\mathbb{P}^1$ via $\phi$. Similarly we get $E_1 \sim E_3$. If the
four orbits are all linearly equivalent then $Div(C)^G/\sim ~ \cong
\mathbb{Z}$ which contradicts to (1). Therefore the four orbits
cannot be all linearly equivalent. Since all $G$-invariant divisors
are linear combination of $G$-orbits, we get (2).
\end{proof}

\begin{lem}
\[ h^0(C, \mathcal{O}_C(2E_1-E_2)) = 0, \]
\[ h^1(C, \mathcal{O}_C(2E_1-E_2)) = 0, \]
\[ h^0(C, \mathcal{O}_C(E_2-2E_1)) = 0, \]
\[ h^1(C, \mathcal{O}_C(E_2-2E_1)) = 6. \]
\end{lem}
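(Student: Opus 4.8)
The plan is to dispose of the two statements involving $E_2 - 2E_1$ immediately by Riemann--Roch and to reduce the two statements involving $2E_1 - E_2$ to the single vanishing $h^0(C, \mathcal{O}_C(2E_1 - E_2)) = 0$. Since $\deg(E_2 - 2E_1) = 3 - 6 = -3 < 0$, the vanishing $h^0(C, \mathcal{O}_C(E_2 - 2E_1)) = 0$ is automatic, and then $\chi(\mathcal{O}_C(E_2 - 2E_1)) = -3 - 4 + 1 = -6$ gives $h^1(C, \mathcal{O}_C(E_2 - 2E_1)) = 6$. Likewise $\deg(2E_1 - E_2) = 3$ forces $\chi(\mathcal{O}_C(2E_1 - E_2)) = 3 - 4 + 1 = 0$, so once I know $h^0(C, \mathcal{O}_C(2E_1 - E_2)) = 0$ the equality $h^1 = h^0$ gives $h^1 = 0$ as well. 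Thus everything reduces to showing that $2E_1 - E_2$ is not linearly equivalent to an effective divisor.

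For this I would exploit the $G$-action. Since $2E_1 - E_2$ is a $G$-invariant divisor, the line bundle $\mathcal{O}_C(2E_1 - E_2)$ admits a $G$-linearization, so $H^0(C, \mathcal{O}_C(2E_1 - E_2))$ becomes a representation of $G = (\mathbb{Z}/3)^2$. As $G$ is abelian, this representation splits into one-dimensional character spaces. Suppose, for contradiction, that $h^0 > 0$; then there is a nonzero eigenvector $s$, i.e. $g \cdot s = \chi(g)\, s$ for some $\chi \in \widehat{G}$ and all $g \in G$. The divisor of zeros $\mathrm{div}(s) \in |2E_1 - E_2|$ is then $G$-invariant, since each $g$ carries $s$ to a scalar multiple of itself and hence preserves its zeros. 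So $\mathrm{div}(s)$ is an effective $G$-invariant divisor of degree $3$.

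Next I would classify such divisors. The nontrivial stabilizers of the $G$-action on $C$ all have order $3$ (they are $\langle e_1\rangle$ and $\langle e_2\rangle$), so every $G$-orbit on $C$ has length $9$ or $3$, the length-$3$ orbits being exactly $E_1, E_2, E_3, E_4$. An effective $G$-invariant divisor is a nonnegative combination of orbits, so a degree-$3$ one must be a single length-$3$ orbit; hence $\mathrm{div}(s) \in \{E_1, E_2, E_3, E_4\}$. Using $E_3 \sim E_1$ and $E_4 \sim E_2$ from the previous lemma, this yields either $2E_1 - E_2 \sim E_1$ or $2E_1 - E_2 \sim E_2$. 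The first gives $E_1 \sim E_2$, contradicting part (2) of the previous lemma. The second gives $2(E_1 - E_2) \sim 0$; but the isomorphism $Div(C)^G/{\sim}\, \cong \mathbb{Z} \oplus \mathbb{Z}/3$ shows that $E_1 - E_2$ lies in the torsion subgroup (both classes have the same image in $\mathbb{Z}$) and is nonzero, so it has order exactly $3$, and combined with $2(E_1 - E_2) \sim 0$ this forces $E_1 - E_2 \sim 0$, again a contradiction. Hence $h^0(C, \mathcal{O}_C(2E_1 - E_2)) = 0$.

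The main obstacle is the middle step: producing a $G$-invariant member of the linear system $|2E_1 - E_2|$ whenever it is nonempty. This is exactly where the hypothesis that $G$ is abelian enters, through the decomposition of $H^0$ into characters, and it is what turns the otherwise delicate question of whether a degree-$3$ class on a genus-$4$ curve is effective into a rigid one: effectivity would force a very special $G$-invariant representative, which the torsion structure of $Div(C)^G/{\sim}$ then rules out. A minor point to verify carefully is that all nontrivial stabilizers really have order $3$, so that there are no length-$1$ orbits producing stray effective invariant divisors; this follows from the description of the stabilizer elements $e_1, e_2, -e_1, -e_2$ recorded above.
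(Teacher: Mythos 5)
Your proposal is correct and follows essentially the same route as the paper: Riemann--Roch reduces everything to $h^0(C,\mathcal{O}_C(2E_1-E_2))=0$, which is proved by taking a $G$-eigensection, observing its divisor of zeros would be a $G$-invariant effective divisor of degree $3$, hence linearly equivalent to $E_1$ or $E_2$, and deriving a contradiction with $E_1\not\sim E_2$. Your write-up is in fact slightly more complete than the paper's, since you spell out why $2(E_1-E_2)\sim 0$ is impossible via the $3$-torsion structure of $Div(C)^G/{\sim}$, a step the paper leaves implicit.
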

\begin{proof}
From the Riemann-Roch formula we find that $$ h^0(C, \mathcal{O}_C(2E_1-E_2)) - h^1(C, \mathcal{O}_C(2E_1-E_2)) = 3 +1-4 = 0. $$
Therefore it suffices to show that $h^0(C, \mathcal{O}_C(2E_1-E_2)) = 0.$ We know that $E_1, E_2$ are $G$-invariant divisors on $C$ and hence there is a $G$-action on $ H^0(C, \mathcal{O}_C(2E_1-E_2)). $ If $ h^0(C, \mathcal{O}_C(2E_1-E_2)) \neq 0, $ then there is a $G$-eigensection $f \in H^0(C, \mathcal{O}_C(2E_1-E_2)),$ and $2E_1-E_2+(f)$ should be a $G$-invariant effective divisor of degree $3$. Every $G$-invariant effective divisor of degree $3$ on $C$ is linearly equivalent to $E_1$ or $E_2$ by the above lemmas. It follows that $2E_1-E_2 \sim E_1$ or $2E_1-E_2 \sim E_2$. Then $ E_1-E_2 \sim 0 $ or $ 2E_1-2E_2 \sim 0 $ which contradicts the assumption that $E_1$ and $E_2$ are not linearly equivalent.

Similarly we get $$ h^0(C, \mathcal{O}_C(E_2-2E_1)) - h^1(C, \mathcal{O}_C(E_2-2E_1)) = -3 +1-4 = -6, $$
and $$ h^0(C, \mathcal{O}_C(E_2-2E_1)) = 0 $$ because the degree of $\mathcal{O}_C(E_2-2E_1)$ is negative.
\end{proof}

\begin{rmk}
From the same argument as above, we can find two set-theoretic orbits of ramification points $F_1$, $F_2$ on $D$ which are not linearly equivalent. Then we have
\[ h^0(D, \mathcal{O}_D(2F_1-F_2)) = 0, $$ $$ h^1(D, \mathcal{O}_D(2F_1-F_2)) = 0, \]
\[ h^0(D, \mathcal{O}_D(F_2-2F_1)) = 0, $$ $$ h^1(D, \mathcal{O}_D(F_2-2F_1)) = 6. \]
\end{rmk}

\subsection{Exceptional sequences of line bundles on $S$}

Let $X$ be the product of $C$ and $D$. By abuse of notation, we let $\mathcal{O}_X(E_i)$ (respectively, $\mathcal{O}_X(F_i)$) for $i \in \{1,2\}$ denote the pullback of $ \mathcal{O}_C(E_i) $ (respectively, $ \mathcal{O}_D(F_i)$). For any character $\chi \in Hom(G,\mathbb{C}^*)$, we can identify equivariant line bundles $\mathcal{O}_X(E_i)(\chi)$ (respectively, $ \mathcal{O}_X(F_i)(\chi) $) with line bundles on $S$.

\begin{thm} For any choice of $4$ characters $\chi_1, \chi_2, \chi_3, \chi_4$,
\[ \mathcal{O}_X(\chi_1), \mathcal{O}_X(E_2-2E_1)(\chi_2), \mathcal{O}_X(F_2-2F_1)(\chi_3), \mathcal{O}_X(E_2-2E_1+F_2-2F_1)(\chi_4) \] form an exceptional sequence of line bundles of maximal length on $S$.
\end{thm}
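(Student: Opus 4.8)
The plan is to reduce every required Ext-vanishing on $S$ to a cohomology computation on the product $X = C \times D$ via the étale Galois cover $\pi : X \to S$, and then to apply the K\"{u}nneth formula together with the vanishing recorded in the two preceding lemmas. First I would record the basic descent fact: since $\pi$ is étale of degree $|G|$ and the four bundles arise from $G$-equivariant line bundles $\widetilde{L}_i$ on $X$, one has
$$ \mathrm{Ext}^k_S(L_i, L_j) \cong H^k(X, \widetilde{L}_i^\vee \otimes \widetilde{L}_j)^G $$
for every $k$, because $\pi_* \mathcal{O}_X = \bigoplus_{\chi \in \widehat{G}} \mathcal{L}_\chi$ and taking $G$-invariants extracts the trivial summand. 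In particular, whenever the \emph{full} cohomology $H^k(X, \widetilde{L}_i^\vee \otimes \widetilde{L}_j)$ vanishes for all $k$, so does every Ext group on $S$, regardless of the characters $\chi_m$. This is the observation that will make the statement hold for an arbitrary choice of characters.

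Exceptionality of each $L_i$ is immediate, since a line bundle on a surface with $p_g=q=0$ is exceptional, as already noted. It remains to check $\mathrm{Ext}^k_S(L_i, L_j)=0$ for all $k$ and all $i>j$. Writing the four classes (suppressing characters, which by the previous paragraph are irrelevant) as $(0,0)$, $(E_2-2E_1,0)$, $(0,F_2-2F_1)$, $(E_2-2E_1,F_2-2F_1)$ in $\mathrm{Pic}(C)\times\mathrm{Pic}(D)$, each difference $\widetilde{L}_i^\vee\otimes\widetilde{L}_j$ is a box product $\mathcal{O}_C(\alpha)\boxtimes\mathcal{O}_D(\beta)$ with $\alpha\in\{0,\pm(E_2-2E_1)\}$ and $\beta\in\{0,\pm(F_2-2F_1)\}$. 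By the K\"{u}nneth formula,
$$ H^k(X,\mathcal{O}_C(\alpha)\boxtimes\mathcal{O}_D(\beta)) \cong \bigoplus_{p+q=k} H^p(C,\mathcal{O}_C(\alpha))\otimes H^q(D,\mathcal{O}_D(\beta)), $$
which vanishes in every degree as soon as one of the two tensor factors has cohomology vanishing in all degrees.

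The key step is the observation that, for each of the six pairs $i>j$, exactly this happens. Going through them, the $C$-component $\alpha$ equals $2E_1-E_2$ for the pairs $(2,1)$, $(4,1)$, $(4,3)$, while the $D$-component $\beta$ equals $2F_1-F_2$ for the pairs $(3,1)$, $(3,2)$, $(4,2)$; and by the lemma and remark above $H^\bullet(C,\mathcal{O}_C(2E_1-E_2))=0$ and $H^\bullet(D,\mathcal{O}_D(2F_1-F_2))=0$. Thus in every pair the ``bad'' direction --- the one carrying $E_2-2E_1$ or $F_2-2F_1$, whose $h^1$ equals $6$ --- is always paired with a factor of completely vanishing cohomology, so the K\"{u}nneth product vanishes and hence $\mathrm{Ext}^k_S(L_i,L_j)=0$. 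This establishes that the sequence is exceptional.

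Finally, for maximality I would invoke the lemma computing $K(S)\cong\mathbb{Z}^2\oplus\mathrm{Pic}(S)$: since $\mathrm{Pic}(S)$ has free rank $2$, the Grothendieck group has rank $4$, so no exceptional sequence can exceed length $4$; our sequence has four terms and is therefore of maximal length. I expect the only genuine subtlety to be verifying the descent identity for the Ext groups with the correct equivariant structures, but because the underlying cohomology on $X$ vanishes outright, the passage to $G$-invariants --- and with it all bookkeeping of the characters $\chi_m$ --- becomes trivial.
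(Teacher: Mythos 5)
Your proof is correct and takes essentially the same approach as the paper's: exceptionality of each line bundle from $p_g=q=0$, reduction of the required $\mathrm{Ext}$-vanishing on $S$ to $G$-invariant cohomology on $X=C\times D$, the K\"unneth formula combined with the vanishing of $H^\bullet(C,\mathcal{O}_C(2E_1-E_2))$ and $H^\bullet(D,\mathcal{O}_D(2F_1-F_2))$ from the preceding lemmas, and maximality from the rank-$4$ Grothendieck group. The only difference is expository: you make explicit the descent identity and the check of all six pairs $i>j$, which the paper compresses into the list of four vanishing bundles on $X$ and the remark that their $G$-invariant parts vanish.
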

\begin{proof}
Since $p_g=q=0$, every line bundle on $S$ is exceptional.
From the K\"{u}nneth formula we find that
\[ h^j(X, \mathcal{O}_X(2E_1-E_2)) = 0, \forall j, \]
\[ h^j(X, \mathcal{O}_X(2F_1-F_2)) = 0, \forall j, \]
\[ h^j(X, \mathcal{O}_X(2E_1-E_2+2F_1-F_2)) = 0, \forall j, \]
\[ h^j(X, \mathcal{O}_X(-2E_1+E_2+2F_1-F_2)) = 0, \forall j. \]
Therefore the $G$-invariant parts are also trivial. Hence, we find that $\mathcal{O}_X(\chi_1)$, $\mathcal{O}_X(E_2-2E_1)(\chi_2)$, $\mathcal{O}_X(F_2-2F_1)(\chi_3)$, $\mathcal{O}_X(E_2-2E_1+F_2-2F_1)(\chi_4)$ form an exceptional sequence. Since $K(S) \cong \mathbb{Z}^4 \oplus (\mathbb{Z}/3)^5$, the maximal length of exceptional sequences on $S$ is $4$.
\end{proof}

\subsection{Deformations of categories generated by exceptional sequences}

In this subsection we discuss about the deformations of categories generated by exceptional sequences. In order to do this we recall definitions and basic facts about $A_{\infty}$-algebras. For details, see {\cite{Ke}}.

\begin{defi}{\cite{Ke}}
An $A_{\infty}$-algebra is a $\mathbb{Z}$-graded vector space
\[ A = \bigoplus_{p \in \mathbb{Z}} A^p \]
endowed with graded maps
\[ m_n : A^{\otimes n} \to A, n \geq 1, \]
of degree $2-n$ satisfying
\[ \sum (-1)^{r+st}m_{r+1+t}(1^{\otimes r}\otimes m_s \otimes 1^{\otimes t}) = 0, \]
where the sum runs over all decompositions $n=r+s+t$.
\end{defi}

\begin{defi}{\cite{Ke}}
An $A_{\infty}$-algebra $A$ is called strictly unital if it has an element $1$ of degree zero such that $m_1(1)=0$, $m_2(1,a)=m_2(a,1)=a$ for all $a \in A$ and for $n \geq 3$, $m_n(a_1,\cdots,a_n)=0$ if one of $a_i \in \{a_1, \cdots a_n\} \subset A$ is equal to $1$.
\end{defi}

We want to prove that the $A_{\infty}$-algebra of endomorphism of the exceptional sequences constructed above is formal. We follows the arguments in {\cite{AO}}, {\cite{BBS}} and {\cite{GS}}.

\begin{prop}
$T = \mathcal{O}_X(\chi_1) \oplus \mathcal{O}_X(E_2-2E_1)(\chi_2) \oplus \mathcal{O}_X(F_2-2F_1)(\chi_3) \oplus \mathcal{O}_X(E_2-2E_1+F_2-2F_1)(\chi_4)$, and let $B=RHom(T,T)$ be the DG-algebra of endomorphisms. Then $B$ is formal, i.e. $H^*(B)$ can be chosen to be a graded algebra.
\end{prop}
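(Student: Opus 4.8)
The plan is to reduce formality to the vanishing of the higher products in a minimal $A_\infty$-model of $B$, and then to force that vanishing by a bidegree count. By Kadeishvili's theorem the cohomology $H^*(B) = \bigoplus_{i,j}\operatorname{Ext}^*(L_i,L_j)$, with $L_1=\mathcal{O}_X(\chi_1)$, $L_2=\mathcal{O}_X(E_2-2E_1)(\chi_2)$, $L_3=\mathcal{O}_X(F_2-2F_1)(\chi_3)$ and $L_4=\mathcal{O}_X(E_2-2E_1+F_2-2F_1)(\chi_4)$, carries a strictly unital minimal $A_\infty$-structure whose $m_2$ is the Yoneda product and which is quasi-isomorphic to $B$; formality of $B$ is equivalent to being able to choose this structure with $m_n=0$ for all $n\ge 3$. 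So it is enough to exhibit such a choice.

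First I would read off $H^*(B)$ from the K\"unneth formula and the cohomology computations of the preceding lemmas. As $X$ is a surface and every $L_i^\vee\otimes L_j$ is a line bundle, $H^*(B)$ lives in cohomological degrees $0,1,2$. The degree-$0$ part is exactly $\bigoplus_{i=1}^4 \mathbb{C}\,\mathrm{id}_{L_i}$, because the off-diagonal $\operatorname{Hom}$-groups vanish (as $H^0(C,\mathcal{O}_C(E_2-2E_1))=H^0(D,\mathcal{O}_D(F_2-2F_1))=0$) and the objects are exceptional; in particular every non-identity class has cohomological degree $\ge 1$. The degree-$1$ classes occur only in $\operatorname{Ext}^1(L_1,L_2)$, $\operatorname{Ext}^1(L_1,L_3)$, $\operatorname{Ext}^1(L_2,L_4)$ and $\operatorname{Ext}^1(L_3,L_4)$, while the degree-$2$ classes sit in the groups $\operatorname{Ext}^2(L_i,L_j)$; the precise dimensions will not matter.

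The crucial device is an auxiliary $\mathbb{Z}^2$-grading, the \emph{weight}. Writing each $L_i$ as $\mathcal{O}_X$ twisted by $a_i(E_2-2E_1)+b_i(F_2-2F_1)$ with $(a_i,b_i)\in\{0,1\}^2$, I assign to a class in $\operatorname{Ext}^*(L_i,L_j)$ the weight $(a_j-a_i,\,b_j-b_i)$. This refines $B$ into a direct sum of subcomplexes $B=\bigoplus_w B_w$ with $d(B_w)\subseteq B_w$ and $B_w\cdot B_{w'}\subseteq B_{w+w'}$, so the cohomology inherits the weight grading. Choosing the contraction in the homotopy transfer weight by weight (possible since one may split each $B_w$ separately), all transferred products $m_n$ become weight-preserving. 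Under this grading every degree-$1$ class has weight $(1,0)$ or $(0,1)$. Moreover the vanishings $H^*(C,\mathcal{O}_C(2E_1-E_2))=H^*(D,\mathcal{O}_D(2F_1-F_2))=0$ from the lemmas kill the mixed groups $\operatorname{Ext}^*(L_2,L_3)$ and $\operatorname{Ext}^*(L_3,L_2)$, so, together with exceptionality, every nonzero homogeneous class of $H^*(B)$ has weight in $\{0,1\}^2$ and hence coordinate-sum at most $2$.

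The conclusion is then a counting argument. Fix $n\ge 3$ and suppose $m_n(x_1,\dots,x_n)\ne 0$. By strict unitality no $x_k$ is an identity, so each $x_k$ has cohomological degree $\ge 1$; since $m_n$ has degree $2-n$ and $H^*(B)$ vanishes above degree $2$, additivity of degree forces every $x_k$ to have degree exactly $1$ and the output to have degree $2$. But then each input has weight $(1,0)$ or $(0,1)$, so the output weight is a sum of $n$ such vectors and has coordinate-sum $n\ge 3$, contradicting that the nonzero output has coordinate-sum at most $2$. Hence $m_n=0$ for all $n\ge 3$ and $B$ is formal. I expect the main obstacle to be the justification that the $\mathbb{Z}^2$-weight grading survives to the minimal model---i.e. that the transfer can be performed weight-homogeneously---since once that is granted the degree-plus-weight bookkeeping is forced; note that the cohomological grading alone is insufficient, because $n$ inputs of degree $1$ already land in degree $2$, which is exactly why the weight refinement is needed.
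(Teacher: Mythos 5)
Your proof is correct and is essentially the paper's own argument: both pass to a strictly unital minimal $A_\infty$-model of $B$ (Kadeishvili plus \cite[Lemma 2.1]{Sei}), both rest on the same K\"unneth vanishings --- in particular $H^*(X,\mathcal{O}_X(2E_1-E_2-2F_1+F_2))=H^*(X,\mathcal{O}_X(2F_1-F_2-2E_1+E_2))=0$, which kill the mixed groups $Ext^*(L_2,L_3)$ and $Ext^*(L_3,L_2)$ --- and both conclude that any $m_n$ with $n\geq 3$ can only receive inputs among which some factor is (a multiple of) an identity morphism, hence vanishes by strict unitality. Your $\mathbb{Z}^2$-weight grading is a bookkeeping repackaging of the paper's composability observation that the Ext quiver $1\to\{2,3\}\to 4$ has no composable chain of three or more non-identity morphisms; indeed, since every nonzero non-identity class has weight coordinate-sum $1$ or $2$, the weight count alone already yields the contradiction for $n\geq 3$ inputs, so your cohomological-degree step is redundant (though harmless).
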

\begin{proof}
Consider the minimal model $H^*(B)$ of $B$. We want to show that $H^*(B)$ does not have nontrivial $m_n$ for $n \geq 3$. From the K\"{u}nneth formula we have the following :
$$ H^k(X,\mathcal{O}_X(2E_1-E_2-2F_1+F_2))=0, ~~ \forall k \in \mathbb{Z}. $$
$$ H^k(X,\mathcal{O}_X(2F_1-F_2-2E_1+E_2))=0, ~~ \forall k \in \mathbb{Z}. $$

By {\cite[Lemma 2.1]{Sei}}, we may assume that the $H^*(B)$ is strictly unital. Consider arbitrary $m_n(b_1,\cdots,b_n)$ for $n \geq 3$. The above computations show that if every $b_i$ is nonzero then at least one $b_i$ should be multiple of $1$. Therefore $m_n(b_1,\cdots,b_n)=0$, for all $n \geq 3$.
\end{proof}

In \cite{AO}, Alexeev and Orlov asked the following question.

\begin{question} \cite{AO}
Is is true that for any exceptional collection of maximal length on a smooth projective surface $S$ with ample $K_S$ and with $p_g=q=0$, the DG algebra of endomorphisms of the exceptional collection does not change under small deformations of the complex structure on $X$?
\end{question}

They constructed exceptional sequences of maximal length for primary
Burniat surfaces and proved that the above property holds for their
exceptional sequences in \cite{AO}. This phenomenon was observed for
other general type surfaces ( see \cite{BBKS}, \cite{BBS}, \cite{C}
). We prove that for certain exceptional sequence we constructed the
above question is true.

\begin{prop}
There is a choice of four characters $\chi_i \in \widehat{G}, i \in \{1,2,3,4\}$ such that the DG algebra of endomorphisms of $T = \mathcal{O}_X(\chi_1) \oplus \mathcal{O}_X(E_2-2E_1)(\chi_2) \oplus \mathcal{O}_X(F_2-2F_1)(\chi_3) \oplus \mathcal{O}_X(E_2-2E_1+F_2-2F_1)(\chi_4)$ does not change under small deformations of the complex structure of $S$.
\end{prop}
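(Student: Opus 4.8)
The plan is to reduce the deformation invariance of the DG algebra of endomorphisms to a statement about the deformation theory of the curves $C$ and $D$ together with the $G$-action. By the previous proposition, for any choice of characters the DG algebra $B = RHom(T,T)$ is formal, so its quasi-isomorphism type is determined by the graded algebra $H^*(B)$. Hence it suffices to exhibit a choice of characters $\chi_1, \chi_2, \chi_3, \chi_4$ for which the dimensions of all the $Hom$ and $Ext$ spaces between the summands of $T$, together with the algebra structure given by composition (the Yoneda products), are constant along any small deformation of the complex structure of $S$. Since $S = (C \times D)/G$, the universal deformation of $S$ is governed by the product structure: by Bauer--Catanese the surfaces with $G = (\mathbb{Z}/3)^2$ form a $2$-dimensional connected component of the moduli space, and a small deformation of $S$ corresponds to small deformations of the $G$-covers $C \to \mathbb{P}^1$ and $D \to \mathbb{P}^1$, i.e. to moving the four branch points on each $\mathbb{P}^1$ while preserving the monodromy data that fixes the stabilizer structure.

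The key steps I would carry out are as follows. First, I would identify the nonzero graded pieces of $H^*(B)$. The diagonal summands contribute the exceptional condition $Hom(E,E) = \mathbb{C}$, and the vanishing computations already established, namely that $H^\bullet(X, \mathcal{O}_X(2E_1 - E_2)) = 0$ and the analogous statements for $F$ and for the mixed bundles, force most off-diagonal $Hom$ and $Ext^1$ groups to vanish. What remains are a few potentially nonzero spaces, governed by $H^1(C, \mathcal{O}_C(E_2 - 2E_1))$, which has dimension $6$, and the corresponding group on $D$; these enter the algebra through $Ext^2$ on $X$ computed by the K\"{u}nneth formula. Second, I would choose the four characters $\chi_i$ so as to kill as many of these surviving morphism spaces as possible after passing to $G$-invariants, exactly as in the construction of the exceptional sequence; the point is that tensoring by characters shifts which isotypic components survive, and a suitable choice makes the graded algebra $H^*(B)$ combinatorially rigid. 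Third, I would argue that the dimensions of the surviving cohomology groups on $C$ and $D$ are deformation invariant: since the divisors $E_1, E_2$ (respectively $F_1, F_2$) are the set-theoretic orbits of ramification points, they vary in a flat family over the base of the deformation, and the cohomology dimensions computed by Riemann--Roch depend only on degrees, which are locally constant. Finally, I would show that the products $m_2$ in $H^*(B)$, which are the only higher operations by formality, are determined by the fixed discrete data and hence do not jump.

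The main obstacle I anticipate is step three combined with controlling the $G$-invariant parts. While the total cohomology dimensions on $X$ are clearly deformation invariant by semicontinuity and Euler characteristic arguments, the DG algebra on $S$ is built from the $G$-invariant summands of these cohomology groups, and a priori the $G$-module structure could vary in a family, causing an invariant subspace to jump in dimension even when the total dimension is constant. To rule this out I would use that the $G$-action on $H^\bullet(X, \mathcal{O}_X(\cdots))$ decomposes according to characters via the K\"{u}nneth decomposition of the $G$-actions on $H^\bullet(C, \cdots)$ and $H^\bullet(D, \cdots)$, and that the representation type of these cohomology groups is itself determined by the discrete monodromy data of the covers, which is locked along a connected component of the moduli space. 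The delicate part is verifying that the specific character choice $\chi_i$ produces a semiorthogonal pattern in which every surviving graded piece has a dimension pinned by an Euler characteristic on a curve, so that no semicontinuity slack remains; once this pinning is established, deformation invariance of the whole DG algebra follows formally.
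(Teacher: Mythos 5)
Your outline follows the same general route as the paper (formality, then a character choice that rigidifies the graded algebra, then pinning the remaining dimensions by Euler characteristics), but it has a genuine gap at exactly the point you defer as ``the delicate part'': you never prove that the required choice of characters exists, and that existence claim is the entire content of the proposition. Write $T_1=\mathcal{O}_X(\chi_1)$, $T_2=\mathcal{O}_X(E_2-2E_1)(\chi_2)$, $T_3=\mathcal{O}_X(F_2-2F_1)(\chi_3)$, $T_4=\mathcal{O}_X(E_2-2E_1+F_2-2F_1)(\chi_4)$. The potentially nonzero $Ext^1$ groups are $Ext^1(T_1,T_2)$ and $Ext^1(T_3,T_4)$, which are isotypic components of the $6$-dimensional $G$-representation $H^1(C,\mathcal{O}_C(E_2-2E_1))$, and $Ext^1(T_1,T_3)$ and $Ext^1(T_2,T_4)$, which are isotypic components of the $6$-dimensional representation $H^1(D,\mathcal{O}_D(F_2-2F_1))$. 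The paper's key (and simple) observation is that $\widehat{G}$ has $9$ elements, so a $6$-dimensional representation misses at least three characters; choosing $\chi$ missed by the $C$-side representation and $\chi'$ missed by the $D$-side one, and taking the twists $(\chi_1,\chi_2,\chi_3,\chi_4)=(0,\chi,\chi',\chi+\chi')$, the character differences for all four pairs above equal $\chi$ or $\chi'$, so all four $Ext^1$ groups vanish simultaneously. Your formulation ``kill as many of these surviving morphism spaces as possible'' is not a substitute: if even one $Ext^1$ survives, the Yoneda products $Ext^1\otimes Ext^1\to Ext^2$ are no longer forced by degree reasons, and your final step (constancy of $m_2$) has no argument behind it.

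Once all the $Ext^1$'s are dead, the remainder is much simpler than what you propose, and your ``main obstacle'' evaporates. The minimal model is concentrated in degrees $0$ and $2$: the degree-$0$ part is spanned by the four identity morphisms, any product of two degree-$2$ elements lands in $Ext^4=0$, and higher products vanish by strict unitality together with the same degree count, so the DG algebra is determined up to quasi-isomorphism by its graded dimensions alone. For deformation invariance you do not need to control $G$-isotypic components in families of curves: since $p_g=q=0$, each $T_i$ extends uniquely to any small deformation $S'$ of $S$, and semicontinuity applied directly on $S'$ keeps $Hom$ and $Ext^1$ between distinct summands equal to zero, after which the $Ext^2$ dimensions are forced to equal the Euler characteristics given by Riemann--Roch on the surface, which are deformation invariant. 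This is how the paper concludes; your detour through monodromy data of the covers and flat families of orbit divisors is not wrong, but it is aimed at a difficulty (jumping of invariant subspaces in families) that the direct semicontinuity argument on $S'$ never encounters.
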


\begin{proof}

From the Riemann-Roch theorem for curves and K\"{u}nneth formula we have the following :
\begin{displaymath}
H^k(X,\mathcal{O}_X(E_2-2E_1))=\left \{ {\begin{array}{ll} \mathbb{C}^6 & \textrm{if $k=1$,} \\ \mathbb{C}^{24} & \textrm{if $k=2$,} \\ 0 & \textrm{otherwise.} \end{array}}
\right.
\end{displaymath}

\begin{displaymath}
H^k(X,\mathcal{O}_X(F_2-2F_1))=\left \{ {\begin{array}{ll} \mathbb{C}^6 & \textrm{if $k=1$,} \\ \mathbb{C}^{24} & \textrm{if $k=2$,} \\ 0 & \textrm{otherwise.} \end{array}}
\right.
\end{displaymath}

\begin{displaymath}
H^k(X,\mathcal{O}_X(E_2-2E_1+F_2-2F_1))=\left \{ {\begin{array}{ll} \mathbb{C}^{36} & \textrm{if $k=2$,} \\ 0 & \textrm{otherwise.} \end{array}}
\right.
\end{displaymath}

Then there is a $\chi,\chi' \in \widehat{G}$ such that $H^1(X,\mathcal{O}_X(E_2-2E_1)(\chi))^G=0$ and $H^1(X,\mathcal{O}_X(F_2-2F_1)(\chi'))^G=0$.

From the Riemann-Roch theorem for surfaces we get for any character $\chi+\chi' \in \widehat{G}$,
\begin{displaymath}
H^k(X,\mathcal{O}_X(E_2-2E_1+F_2-2F_1)(\chi+\chi'))^G=\left \{ {\begin{array}{ll} \mathbb{C}^{4} & \textrm{if $k=2$,} \\ 0 & \textrm{otherwise.} \end{array}}
\right.
\end{displaymath}

Therefore there is a choice of four characters $\chi_i \in \widehat{G}, i \in \{1,2,3,4\}$ such that the minimal model of the DG algebra of endomorphisms of $T = \mathcal{O}_X(\chi_1) \oplus \mathcal{O}_X(E_2-2E_1)(\chi_2) \oplus \mathcal{O}_X(F_2-2F_1)(\chi_3) \oplus \mathcal{O}_X(E_2-2E_1+F_2-2F_1)(\chi_4)$ has only terms in degree 0 and 2. The multiplication of two elements of degree 2 is 0 since there is no $Ext^4$ between objects. Hence the structure of the DG-algebra is completely determined in this case.
\end{proof}

We do not know whether the DG algebra of endomorphism of $T = \mathcal{O}_X(\chi_1) \oplus \mathcal{O}_X(E_2-2E_1)(\chi_2) \oplus \mathcal{O}_X(F_2-2F_1)(\chi_3) \oplus \mathcal{O}_X(E_2-2E_1+F_2-2F_1)(\chi_4)$ does not change under small deformations of complex structure of $S$ for every choice of four characters $\chi_i \in \widehat{G}, i \in \{1,2,3,4\}$.

\section{Quasiphantom categories and phantom categories}

In this section we consider Hochschild homologies and cohomologies of the orthogonal complements of the categories generated by exceptional sequences.

\subsection{Hochschild homology and cohomology}
We recall the definition and some basic facts about Hochschild homology and cohomology of a smooth projective variety. For details about Hochchild homology and cohomology, see {\cite{Ku}}.

\begin{defi}{\cite{Ku}}
Let $S$ be a smooth projective variety. The Hochschild  homology and cohomology of $S$ are defined by
\[ HH_*(S) = Hom^*(S \times S, \Delta_*\mathcal{O}_S \otimes \Delta_*\mathcal{O}_S), \]
\[ HH^*(S) = Hom^*_{S \times S}(\Delta_*\mathcal{O}_S, \Delta_*\mathcal{O}_S). \]
\end{defi}

Hochschild homology and cohomology of a smooth projective variety can be computed using the following theorem.

\begin{thm}(Hochschild-Kostant-Rosenberg isomorphisms){\cite[Theorem 8.3]{Ku}}Let $S$ be a smooth projective variety of dimension $n$. Then
\[ HH_t(S) \cong \bigoplus_{p=0}^{n} H^{t+p}(S, \Omega_S^p), \]
\[ HH^t(S) \cong \bigoplus_{p=0}^{n} H^{t-p}(S, \wedge ^p \mathrm{T}_S). \]
\end{thm}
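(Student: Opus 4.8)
The plan is to reduce both Hochschild invariants to sheaf cohomology on $S$ through the geometry of the diagonal $\Delta : S \to S \times S$, the only nontrivial input being the splitting of the derived self-intersection of $\Delta$. I read the definitions with derived functors throughout, so that $HH^t(S) = \operatorname{Ext}^t_{S \times S}(\Delta_*\mathcal{O}_S, \Delta_*\mathcal{O}_S)$ and $HH_t(S) = \mathbb{H}^t(S \times S, \Delta_*\mathcal{O}_S \otimes^L \Delta_*\mathcal{O}_S)$.

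First I would record the structural fact about $\Delta$. Since $S$ is smooth, $\Delta$ is a regular embedding of codimension $n = \dim S$ with normal bundle $\mathrm{T}_S$, hence conormal bundle $\Omega_S^1$. Locally the ideal of the diagonal is generated by a regular sequence, so $\Delta_*\mathcal{O}_S$ is resolved by a Koszul complex; applying $L\Delta^*$ and observing that the induced differentials vanish identifies the cohomology sheaves as $\mathcal{H}^{-p}(L\Delta^*\Delta_*\mathcal{O}_S) \cong \wedge^p \Omega_S^1 = \Omega_S^p$. The Hochschild--Kostant--Rosenberg theorem, which holds because we work over $\mathbb{C}$, upgrades this to a formality statement: the object splits as the direct sum of its shifted cohomology sheaves,
\[ L\Delta^*\Delta_*\mathcal{O}_S \;\cong\; \bigoplus_{p=0}^{n} \Omega_S^p[p]. \]

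Granting this, both isomorphisms are formal. For cohomology I would apply the adjunction $(L\Delta^*, R\Delta_*)$ and then the splitting:
\[ HH^t(S) \cong \operatorname{Ext}^t_S(L\Delta^*\Delta_*\mathcal{O}_S, \mathcal{O}_S) \cong \bigoplus_{p=0}^{n} \operatorname{Ext}^{t-p}_S(\Omega_S^p, \mathcal{O}_S), \]
and since $\Omega_S^p$ is locally free the $p$-th summand is $H^{t-p}(S, (\Omega_S^p)^\vee) = H^{t-p}(S, \wedge^p \mathrm{T}_S)$. For homology I would instead use the projection formula $\Delta_*\mathcal{O}_S \otimes^L \Delta_*\mathcal{O}_S \cong \Delta_*\bigl(L\Delta^*\Delta_*\mathcal{O}_S\bigr)$ together with $\mathbb{H}^*(S \times S, \Delta_*(-)) \cong \mathbb{H}^*(S, -)$, giving
\[ HH_t(S) \cong \bigoplus_{p=0}^{n} \mathbb{H}^t(S, \Omega_S^p[p]) = \bigoplus_{p=0}^{n} H^{t+p}(S, \Omega_S^p). \]

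The main obstacle is the formality step, not the bookkeeping. Computing the cohomology sheaves of $L\Delta^*\Delta_*\mathcal{O}_S$ is routine Koszul algebra, but the splitting into $\bigoplus_p \Omega_S^p[p]$ genuinely uses characteristic zero; in small positive characteristic the object can fail to be formal. I would establish it by globalizing the classical affine HKR quasi-isomorphism (antisymmetrization on the Koszul resolution) following Swan and Yekutieli, or alternatively via the canonical splitting built from the exponential of the Atiyah class in the style of Kontsevich, Markarian and C\u{a}ld\u{a}raru. Once this is in place, the adjunction and projection-formula computations above finish both statements.
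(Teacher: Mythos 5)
Your proposal is correct, but note that the paper does not prove this statement at all: it is quoted verbatim from Kuznetsov (reference [Ku], Theorem 8.3), where it serves as a black box for the computations in Section 4. What you have written is essentially the standard derivation that underlies Kuznetsov's statement, and your bookkeeping is accurate on both sides: the adjunction $(L\Delta^*, R\Delta_*)$ (valid since $\Delta_* = R\Delta_*$ for a closed immersion) gives $HH^t(S)\cong \operatorname{Ext}^t_S(L\Delta^*\Delta_*\mathcal{O}_S,\mathcal{O}_S)$, the projection formula gives $\Delta_*\mathcal{O}_S\otimes^L\Delta_*\mathcal{O}_S\cong\Delta_*\bigl(L\Delta^*\Delta_*\mathcal{O}_S\bigr)$, and feeding in the splitting $L\Delta^*\Delta_*\mathcal{O}_S\cong\bigoplus_{p=0}^n\Omega_S^p[p]$ reproduces exactly the two displayed formulas, including the index shifts $t-p$ and $t+p$. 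You are also right to isolate the formality of the derived self-intersection as the genuine mathematical content: the Koszul computation of the cohomology sheaves is routine, while the splitting requires characteristic zero and is itself the Hochschild--Kostant--Rosenberg theorem in its global form (Swan, Yekutieli, or the Atiyah-class construction of Kontsevich, Markarian and C\u{a}ld\u{a}raru). Since you cite rather than prove that step, your argument has the same logical status as the paper's citation --- a reduction of the stated isomorphisms to the known global HKR quasi-isomorphism --- but unlike the paper it makes the reduction explicit, which is a useful clarification of what the cited theorem actually rests on.
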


Let $S$ be a smooth projective variety. Kuznetsov furthermore defined Hochshild homology and cohomology for any admissible subcategory $\mathcal{A} \subset D^b(S)$ in {\cite{Ku}}.

\begin{defi}{\cite[Definition 4.4]{Ku}}
Let $S$ be a smooth projective variety, and $\mathcal{A} \subset D^b(S)$ be an admissible subcategory. Let $\mathcal{E}_\mathcal{A}$ be a strong generator of $\mathcal{A}$ and $\mathcal{C}_{\mathcal{A}}=RHom^*(\mathcal{E}_\mathcal{A},\mathcal{E}_\mathcal{A})$. Then the Hochschild homology and cohomology of $\mathcal{A}$ are defined as follows :
\[ HH_*(\mathcal{A}) := \mathcal{C}_{\mathcal{A}} \otimes^L_{\mathcal{C}_{\mathcal{A}} \otimes \mathcal{C}_{\mathcal{A}}^{opp}} \mathcal{C}_{\mathcal{A}}, \]
\[ HH^*(\mathcal{A}) := RHom_{\mathcal{C}_{\mathcal{A}} \otimes \mathcal{C}_{\mathcal{A}}^{opp}}(\mathcal{C}_{\mathcal{A}},\mathcal{C}_{\mathcal{A}}). \]
\end{defi}

Then Kuznetsov proved the additivity of Hochschild homology with respect to the semiorthogonal decomposition in {\cite{Ku}} which is a main tool to compute the Hochschild homology of the orthogonal complement of an admissible subcategory.

\begin{thm}{\cite[Corollary 7.5, Corollary 8.4]{Ku}}
(1) For any semiorthogonal decomposition $D^b(S) = \langle \mathcal{A}_1, \cdots, \mathcal{A}_n \rangle$, there is an isomorphism \[ HH_*(S) \cong HH_*(\mathcal{A}_1) \oplus \cdots \oplus HH_*(\mathcal{A}_n). \]
(2) If $E$ is an exceptional object in $D^b(S)$, then $HH_*(\langle E \rangle) \cong HH^*(\langle E \rangle) \cong \mathbb{C}$.
\end{thm}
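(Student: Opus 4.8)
The plan is to prove part (2) directly from the definitions, and to reduce part (1) to a formal statement about the Hochschild complex of a triangular DG algebra.

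\emph{Part (2).} The exceptional object $E$ is itself a strong generator of $\langle E\rangle$, so I take $\mathcal{E}_{\langle E\rangle}=E$ and $\mathcal{C}_{\langle E\rangle}=RHom^*(E,E)$. By the defining property of an exceptional object, $RHom^*(E,E)\cong\mathbb{C}$ concentrated in degree $0$, with zero differential and its unique unital algebra structure. Substituting into the two definitions and using $\mathbb{C}\otimes\mathbb{C}^{opp}\cong\mathbb{C}$, I get $HH_*(\langle E\rangle)=\mathbb{C}\otimes^L_{\mathbb{C}}\mathbb{C}\cong\mathbb{C}$ and $HH^*(\langle E\rangle)=RHom_{\mathbb{C}}(\mathbb{C},\mathbb{C})\cong\mathbb{C}$, both in degree $0$.

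\emph{Part (1), setup.} Since $S$ is smooth projective, $D^b(S)$ and each admissible component $\mathcal{A}_i$ admit strong generators; I choose $\mathcal{E}_i$ generating $\mathcal{A}_i$ and set $\mathcal{E}=\bigoplus_{i=1}^n\mathcal{E}_i$, a strong generator of $D^b(S)$. The DG algebra $\mathcal{C}=RHom^*(\mathcal{E},\mathcal{E})$ carries orthogonal idempotents $e_i$ (the identities of $\mathcal{E}_i$) with $\sum e_i=1$, giving a block decomposition $\mathcal{C}=\bigoplus_{i,j}\mathcal{C}_{ij}$ with $\mathcal{C}_{ij}=RHom^*(\mathcal{E}_i,\mathcal{E}_j)$ and diagonal blocks $\mathcal{C}_{ii}=\mathcal{C}_{\mathcal{A}_i}$. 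The semiorthogonality $Hom(\mathcal{A}_i,\mathcal{A}_j[k])=0$ for $i>j$ forces $\mathcal{C}_{ij}=0$ whenever $i>j$, so $\mathcal{C}$ is triangular. I then invoke the comparison $HH_*(S)\cong HH_*(\mathcal{C})$: the equivalence $D^b(S)\simeq\mathrm{Perf}(\mathcal{C})$ together with the Morita invariance of Hochschild homology identifies the geometric $HH_*(S)$ (defined via $\Delta_*\mathcal{O}_S$) with $HH_*(\mathcal{C})$; the same invariance is what makes $HH_*(\mathcal{A})$ independent of the chosen strong generator.

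\emph{Part (1), core.} By Morita invariance I replace $\mathcal{C}$ by the DG category $\mathcal{D}$ with objects $1,\dots,n$ and morphism complexes $\mathcal{D}(i,j)=\mathcal{C}_{ij}$. Its Hochschild complex in degree $m$ is the direct sum, over tuples $(x_0,\dots,x_m)$, of $\mathcal{C}_{x_0x_1}\otimes\mathcal{C}_{x_1x_2}\otimes\cdots\otimes\mathcal{C}_{x_{m-1}x_m}\otimes\mathcal{C}_{x_mx_0}$, the last factor closing the cycle $x_m\to x_0$. Triangularity forces any nonzero summand to satisfy $x_0\le x_1\le\cdots\le x_m\le x_0$, hence all $x_l$ equal to a single index $i$; the surviving summands are exactly the Hochschild complex of the diagonal algebra $\mathcal{C}_{ii}$, and the differential (which only composes adjacent morphisms) preserves this splitting. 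Therefore $HH_*(\mathcal{C})\cong\bigoplus_i HH_*(\mathcal{C}_{ii})=\bigoplus_i HH_*(\mathcal{A}_i)$, and combining with $HH_*(S)\cong HH_*(\mathcal{C})$ gives the asserted additivity.

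I expect the main obstacle to be the comparison $HH_*(S)\cong HH_*(\mathcal{C})$ and, with it, the well-definedness of $HH_*(\mathcal{A})$: justifying these requires a canonical DG enhancement of $D^b(S)$, the Bondal--Van den Bergh existence of strong generators, and Keller's invariance of Hochschild homology under Morita equivalence, so that the geometric definition via $\Delta_*\mathcal{O}_S$ agrees with the generator-based one. Once that bridge is in place, the triangularity collapse is a purely formal consequence of the cyclic structure of the Hochschild complex, and part (2) is the special case in which each diagonal block is $\mathbb{C}$.
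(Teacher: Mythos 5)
This statement is imported verbatim from Kuznetsov \cite{Ku} (Corollary 7.5 and Corollary 8.4); the paper under review gives no proof of it, so there is nothing internal to compare against. Your sketch is correct and in fact follows essentially Kuznetsov's own route: part (2) is the immediate computation $\mathcal{C}_{\langle E\rangle}\simeq\mathbb{C}$, and part (1) is the collapse of the Hochschild complex of an upper-triangular DG algebra to its diagonal blocks, which is exactly how \cite{Ku} proves additivity. Two points you handled well and should keep explicit: the passage from the single algebra $RHom^*(\mathcal{E},\mathcal{E})$ to the DG category with objects $\mathcal{E}_1,\dots,\mathcal{E}_n$ is not cosmetic (the bar-type complex of the algebra itself contains nonzero mismatched-block tensor summands, so the naive splitting fails before Morita replacement), and the genuinely nontrivial input is the comparison $HH_*(S)\cong HH_*(\mathcal{C})$ together with generator-independence of $HH_*(\mathcal{A}_i)$, which rests on Keller's Morita/derived invariance and the existence of strong generators \`{a} la Bondal--Van den Bergh --- you correctly isolated this as the load-bearing step rather than claiming it follows formally.
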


In the rest of this subsection we will follow \cite{Ku2} to compute the Hochschild cohomologies of the orthogonal complements of the exceptional sequences. Let $S$ be a smooth projective variety and let $\mathcal{D}$ be the $\check{C}ech$ enhancement of $D^b(S)$ and $\mathcal{E} \subset \mathcal{D}$ a DG-subcategory.

\begin{defi} \cite{Ku2} The normal Hochschild cohomology of $\mathcal{E}$ of $\mathcal{D}$ is defined as follows
$$ NHH^*(\mathcal{E},\mathcal{D}):=\mathcal{E} \otimes_{\mathcal{E}^{opp} \otimes \mathcal{E}} \mathcal{D}_{\mathcal{E}}^{\vee}. $$
\end{defi}

Then Kuznetsov proved that the normal Hochschild cohomology can be used to study the restriction of Hochschild cohomology of $S$ to Hochschild cohomology of orthogonal complement of exceptional sequence $E_1, \cdots, E_n$ in the following way.

\begin{thm} \cite{Ku2} If $ D^b(S) = \langle \mathcal{A}, E_1, \cdots, E_n \rangle $ is a semiorthogonal decomposition then there is distinguished triangle
$$ NHH^*(\mathcal{E},\mathcal{D}) \to HH^*(\mathcal{D}) \to HH^*(\mathcal{A}), $$
where $\mathcal{E}$ is the DG-category generated by $E_1, \cdots, E_n$.
\end{thm}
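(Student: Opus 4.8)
The plan is to realise all three terms of the asserted triangle as derived Hom or tensor constructions in the category of $\mathcal{D}$-bimodules, and to produce the triangle itself from the semiorthogonal decomposition at the bimodule level. Write $\Delta_{\mathcal{D}}$ for the diagonal $\mathcal{D}$-bimodule, so that by definition $HH^*(\mathcal{D}) = RHom_{\mathcal{D}\otimes\mathcal{D}^{opp}}(\Delta_{\mathcal{D}},\Delta_{\mathcal{D}})$. The decomposition $D^b(S)=\langle \mathcal{A}, E_1,\cdots,E_n\rangle$ (so that $\mathcal{A}=\mathcal{E}^{\perp}$) gives, functorially in each object, the projection triangle of endofunctors $i_{\mathcal{E}}i_{\mathcal{E}}^{!}\to \mathrm{id}\to i_{\mathcal{A}}p_{\mathcal{A}}$, where $p_{\mathcal{A}}$ is the projection onto $\mathcal{A}$. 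Reading these functors as kernels, equivalently as $\mathcal{D}$-bimodules, yields a distinguished triangle
$$ \Delta_{\mathcal{E}} \to \Delta_{\mathcal{D}} \to \Delta_{\mathcal{A}}, $$
where $\Delta_{\mathcal{E}}$ and $\Delta_{\mathcal{A}}$ are the bimodules of the $\mathcal{E}$- and $\mathcal{A}$-projections. I would take this triangle as the backbone of the argument and apply $RHom_{\mathcal{D}\otimes\mathcal{D}^{opp}}(\Delta_{\mathcal{D}},-)$ to it; the middle term of the resulting triangle is then $HH^*(\mathcal{D})$ by definition.

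The first identification to carry out is that the third term $RHom_{\mathcal{D}\otimes\mathcal{D}^{opp}}(\Delta_{\mathcal{D}},\Delta_{\mathcal{A}})$ computes $HH^*(\mathcal{A})$. Interpreting $RHom$ of bimodules as derived natural transformations of the associated functors, this is the complex of transformations $\mathrm{id}_{\mathcal{D}}\to i_{\mathcal{A}}p_{\mathcal{A}}$; since $p_{\mathcal{A}}$ is adjoint to $i_{\mathcal{A}}$ and $p_{\mathcal{A}}i_{\mathcal{A}}=\mathrm{id}_{\mathcal{A}}$, restriction along $i_{\mathcal{A}}$ collapses this to $RHom(\mathrm{id}_{\mathcal{A}},\mathrm{id}_{\mathcal{A}})=HH^*(\mathcal{A})$. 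This is the standard fact that the identity kernel restricts along an admissible embedding to compute the Hochschild cohomology of the subcategory, and I expect it to go through cleanly.

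It then remains to identify the first term $RHom_{\mathcal{D}\otimes\mathcal{D}^{opp}}(\Delta_{\mathcal{D}},\Delta_{\mathcal{E}})$ with $NHH^*(\mathcal{E},\mathcal{D})=\mathcal{E}\otimes_{\mathcal{E}^{opp}\otimes\mathcal{E}}\mathcal{D}_{\mathcal{E}}^{\vee}$. The key observation is that $\Delta_{\mathcal{E}}$ is (co)induced along $j:\mathcal{E}\hookrightarrow\mathcal{D}$ from the diagonal $\mathcal{E}$-bimodule. Applying the tensor--Hom adjunction between restriction and (co)induction of bimodules moves the whole computation from $\mathcal{D}$-bimodules to $\mathcal{E}$-bimodules, replacing $\Delta_{\mathcal{D}}$ by its restriction $\mathcal{D}_{\mathcal{E}}$ to $\mathcal{E}\times\mathcal{E}$. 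A dualization step of Serre-duality type then converts the $RHom$ against $\mathcal{D}_{\mathcal{E}}$ into a tensor product against the dual bimodule $\mathcal{D}_{\mathcal{E}}^{\vee}$, producing exactly the Hochschild-homology-type expression defining $NHH^*(\mathcal{E},\mathcal{D})$.

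I expect this last identification to be the main obstacle. The delicate points are the bookkeeping of the correct adjoints, so that the (co)induced bimodule $\Delta_{\mathcal{E}}$ is recognised on the nose; the verification that the tensor--Hom adjunction is valid at the derived level for the \v{C}ech (DG) enhancement rather than merely on homotopy categories; and the check that the passage to $\mathcal{D}_{\mathcal{E}}^{\vee}$ is compatible with the $\mathcal{E}^{opp}\otimes\mathcal{E}$-module structures. Once these compatibilities are in place, the three identifications assemble the triangle obtained from $RHom_{\mathcal{D}\otimes\mathcal{D}^{opp}}(\Delta_{\mathcal{D}},-)$ into the asserted $NHH^*(\mathcal{E},\mathcal{D})\to HH^*(\mathcal{D})\to HH^*(\mathcal{A})$.
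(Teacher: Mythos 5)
First, a point of orientation: the paper you are working from does not prove this statement at all. It is quoted, with attribution, from Kuznetsov's paper [Ku2], where normal Hochschild cohomology is defined and the triangle is established; so the only meaningful comparison is with Kuznetsov's argument. Your overall skeleton does match his: lift the semiorthogonal decomposition to a distinguished triangle of $\mathcal{D}$-bimodules with the diagonal $\Delta_{\mathcal{D}}$ in the middle, apply $RHom_{\mathcal{D}^{opp}\otimes\mathcal{D}}(\Delta_{\mathcal{D}},-)$, and identify the two outer terms. Your identification of the $\mathcal{A}$-term is the standard admissible-subcategory computation (it already appears in [Ku]) and is fine, modulo the (standard, but not free) check that the projection functors are represented by bimodules over the \v{C}ech enhancement.

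The genuine gap is in the identification of the first term with $NHH^*(\mathcal{E},\mathcal{D})$, and your proposed mechanism for it would fail. The projection bimodule $\Delta_{\mathcal{E}}$ is $(X,Y)\mapsto \mathcal{D}(X,\mathcal{E})\otimes^{L}_{\mathcal{E}}\mathcal{D}(\mathcal{E},Y)$, i.e. it is \emph{induced} from the diagonal $\mathcal{E}$-bimodule, and induction is the \emph{left} adjoint of restriction: adjunction computes maps out of $\Delta_{\mathcal{E}}$, not into it, and $\Delta_{\mathcal{E}}$ is not coinduced from the diagonal (coinduction of the diagonal is a different bimodule, built from linear duals of Hom-complexes). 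Worse, if your reduction did go through, it would land on the wrong object: since $\mathcal{E}\subset\mathcal{D}$ is a \emph{full} subcategory, the restriction of $\Delta_{\mathcal{D}}$ to $\mathcal{E}^{opp}\otimes\mathcal{E}$ is just $\Delta_{\mathcal{E}}$, so your chain of reductions ends at $RHom_{\mathcal{E}\text{-bimod}}(\Delta_{\mathcal{E}},\Delta_{\mathcal{E}})=HH^*(\mathcal{E})$, or after your linear dualization at a dual variant of it. A concrete test shows neither can be $NHH$: take $\mathcal{D}=D^b(\mathbb{P}^1)=\langle\mathcal{O},\mathcal{O}(1)\rangle$, $\mathcal{E}=\{\mathcal{O}(1)\}$, $\mathcal{A}=\langle\mathcal{O}\rangle$. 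Here $HH^*(\mathbb{P}^1)=\mathbb{C}\oplus\mathbb{C}^3[-1]$ and $HH^*(\mathcal{A})=\mathbb{C}$, so the theorem forces $NHH^*(\mathcal{E},\mathcal{D})\cong\mathbb{C}^3[-1]$; one can also verify this directly, since the projection bimodule is the kernel $\mathcal{O}(-1)\boxtimes\mathcal{O}(1)$ and $\mathrm{Ext}^{\bullet}_{\mathbb{P}^1\times\mathbb{P}^1}(\mathcal{O}_{\Delta},\mathcal{O}(-1)\boxtimes\mathcal{O}(1))=H^{\bullet}(\mathbb{P}^1,\mathcal{O}(2))[-1]=\mathbb{C}^3[-1]$. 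But $HH^*(\mathcal{E})=\mathbb{C}$, and its linear dual is $\mathbb{C}$ as well. So any argument that only ever sees $\Delta_{\mathcal{E}}$ and $\mathbb{C}$-linear duals of Hom-complexes of $\mathcal{E}$ has discarded the ambient information that $NHH$ is designed to retain.

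What actually makes the identification work in [Ku2] is smoothness rather than properness: the diagonal bimodule of $\mathcal{D}$ is perfect, so $RHom_{\mathcal{D}\text{-bimod}}(\Delta_{\mathcal{D}},-)\simeq \Delta^{!}_{\mathcal{D}}\otimes^{L}_{\mathcal{D}\text{-bimod}}(-)$, where $\Delta^{!}_{\mathcal{D}}=RHom_{\mathcal{D}\text{-bimod}}(\Delta_{\mathcal{D}},\mathcal{D}^{opp}\otimes\mathcal{D})$ is the inverse dualizing bimodule, which represents the inverse Serre functor. Plugging in the induced bimodule $\mathcal{D}(-,\mathcal{E})\otimes^{L}_{\mathcal{E}}\mathcal{D}(\mathcal{E},-)$ and collapsing the two restriction factors by co-Yoneda yields exactly $\Delta_{\mathcal{E}}\otimes^{L}_{\mathcal{E}^{opp}\otimes\mathcal{E}}\bigl(\Delta^{!}_{\mathcal{D}}\big|_{\mathcal{E}}\bigr)$; that is, the bimodule $\mathcal{D}_{\mathcal{E}}^{\vee}$ in the statement must be read as the restricted inverse dualizing bimodule, with entries $\mathrm{Ext}^{\bullet}(E_i,S^{-1}E_j)$ --- consistent with the appearance of $S^{-1}$ in Kuznetsov's pseudoheight formula, and giving $\mathrm{Ext}^{\bullet}(\mathcal{O}(1),S^{-1}\mathcal{O}(1))=H^{\bullet}(\mathbb{P}^1,\mathcal{O}(2))[-1]=\mathbb{C}^3[-1]$ in the test above --- not as the naive dual your Serre-duality step would produce. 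Repairing your proposal therefore requires replacing the restriction/coinduction adjunction and the final dualization by this smoothness argument; as written, the plan is an accurate outline of the shape of Kuznetsov's proof but its key step computes the wrong invariant.
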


Then Kuznetsov defined the notion of height.

\begin{defi} \cite{Ku2} The height of an exceptional sequence $E_1, \cdots, E_n$ is defined as
$$ h(E_1, \cdots, E_n)=min\{k \in \mathbb{Z} | NHH^k(\mathcal{E},\mathcal{D}) \neq 0 \}, $$
where $\mathcal{E}$ is the DG-category generated by $E_1, \cdots, E_n$.
\end{defi}

Then the above distinguished triangle gives the following theorem.

\begin{thm} \cite{Ku2} Let $h=h(E_1,\cdots,E_n)$ be the height of an exceptional sequence $E_1, \cdots, E_n$ and let $\mathcal{A}$ be its orthogonal complement. The canonical restriction morphism $HH^k(X) \to HH^k(\mathcal{A})$ is an isomorphism for $k \leq h-2$ and a monomorphism for $k=h-1$.
\end{thm}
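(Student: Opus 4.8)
The plan is to deduce the statement entirely from the distinguished triangle
$$ NHH^*(\mathcal{E},\mathcal{D}) \to HH^*(\mathcal{D}) \to HH^*(\mathcal{A}) $$
of the preceding theorem, where $\mathcal{D}$ is the \v{C}ech enhancement of $D^b(S)$, so that $HH^*(\mathcal{D}) \cong HH^*(S)$, and $\mathcal{E}$ is the DG-category generated by $E_1, \dots, E_n$. First I would pass to the associated long exact sequence of Hochschild cohomology groups,
$$ \cdots \to NHH^k(\mathcal{E},\mathcal{D}) \to HH^k(S) \to HH^k(\mathcal{A}) \to NHH^{k+1}(\mathcal{E},\mathcal{D}) \to \cdots, $$
in which the middle arrow is exactly the canonical restriction morphism $HH^k(S) \to HH^k(\mathcal{A})$ under discussion.

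Next I would invoke the definition of the height $h=h(E_1,\dots,E_n)$ as the smallest integer $k$ with $NHH^k(\mathcal{E},\mathcal{D}) \neq 0$; by definition this forces $NHH^k(\mathcal{E},\mathcal{D})=0$ for every $k<h$, that is, for all $k \le h-1$. Reading off the long exact sequence, the restriction morphism is injective as soon as the term $NHH^k(\mathcal{E},\mathcal{D})$ immediately to its left vanishes, and surjective as soon as the term $NHH^{k+1}(\mathcal{E},\mathcal{D})$ immediately to its right vanishes.

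Combining the two vanishing ranges finishes the argument. For $k \le h-2$ both $NHH^k$ and $NHH^{k+1}$ sit in degrees strictly below $h$ and hence vanish, so the restriction morphism is simultaneously injective and surjective, i.e. an isomorphism; for $k=h-1$ only the left-hand term $NHH^{h-1}$ is guaranteed to vanish, while $NHH^{h}$ may well be nonzero by the very definition of the height, so one obtains injectivity but not necessarily surjectivity, which is precisely the claimed monomorphism. Since the triangle and the identification $HH^*(\mathcal{D}) \cong HH^*(S)$ are furnished by the cited results, there is no genuine obstacle here: the only content is the bookkeeping with the long exact sequence, and the single point needing care is to keep straight which of the two neighbouring $NHH$-terms governs injectivity versus surjectivity, so that the boundary degree $k=h-1$ is correctly pinned down as a monomorphism rather than an isomorphism.
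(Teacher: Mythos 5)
Your proposal is correct, and it follows exactly the route the paper indicates: the paper states this theorem as a citation from Kuznetsov's work, prefaced by the remark that ``the above distinguished triangle gives the following theorem,'' and your argument is precisely that derivation --- pass to the long exact cohomology sequence of the triangle $NHH^*(\mathcal{E},\mathcal{D}) \to HH^*(\mathcal{D}) \to HH^*(\mathcal{A})$, use the vanishing $NHH^k(\mathcal{E},\mathcal{D})=0$ for $k \leq h-1$ from the definition of height, and read off injectivity from the left-hand neighbour and surjectivity from the right-hand neighbour. Your bookkeeping at the boundary degree $k=h-1$ (injective but not necessarily surjective, since $NHH^h$ may be nonzero) is exactly right.
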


In order to compute height Kuznetsov introduced the notion of pseudoheight.

\begin{defi} \cite{Ku2} (1) For any two object $E,E'$ we define their relative height as $e(E,E')=min \{ k|Ext^k(E,E') \neq 0 \}.$ \\
(2) The pseudoheight $ph(E_1, \cdots, E_n)$ of the exceptional sequence $E_1, \cdots, E_n$ is
$$ ph(E_1, \cdots, E_n)=min_{1 \leq a_0 < a_1 < \cdots < a_p \leq n} (e(E_{a_0},E_{a_1})+\cdots+e(E_{a_{p-1}},E_{a_p})+e(E_{a_p},S^{-1}(E_{a_0}))-p). $$
where $S$ is the Serre functor.
\end{defi}

Kuznetsov proved that there is a spectral sequence which converges to the normal Hochschild cohomology and pseudoheight is the minimum of total degrees of nonzero terms of the 1st page of this spectral sequence. Therefore he got the following inequality.

\begin{lem} \cite{Ku2}
$$ h(E_1, \cdots, E_n) \geq ph(E_1, \cdots, E_n). $$
\end{lem}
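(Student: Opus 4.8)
The plan is to deduce the inequality from the spectral sequence that computes the normal Hochschild cohomology $NHH^*(\mathcal{E},\mathcal{D})$, by showing that the pseudoheight is exactly the minimal total degree in which the first page of this spectral sequence is nonzero. First I would resolve the diagonal bimodule $\mathcal{E}$ over $\mathcal{E}^{opp} \otimes \mathcal{E}$ by the reduced bar resolution, whose $p$-th term is built from $\mathcal{E} \otimes \bar{\mathcal{E}}^{\otimes p} \otimes \mathcal{E}$, where $\bar{\mathcal{E}}$ denotes the morphisms between distinct objects of the collection. Because $E_1, \cdots, E_n$ is an exceptional sequence, the only nonzero arrows in $\bar{\mathcal{E}}$ run from $E_i$ to $E_j$ with $i < j$, so after tensoring with $\mathcal{D}_{\mathcal{E}}^{\vee}$ the surviving terms of the resulting complex are indexed by strictly increasing chains $1 \leq a_0 < a_1 < \cdots < a_p \leq n$.

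Next I would identify the first page. Tensoring the bar resolution with $\mathcal{D}_{\mathcal{E}}^{\vee}$ and passing to cohomology, the contribution of a chain $a_0 < \cdots < a_p$ is the tensor product
\[ Ext^\bullet(E_{a_0},E_{a_1}) \otimes \cdots \otimes Ext^\bullet(E_{a_{p-1}},E_{a_p}) \otimes Ext^\bullet(E_{a_p}, S^{-1}(E_{a_0})), \]
where the last factor arises from the dualizing object $\mathcal{D}_{\mathcal{E}}^{\vee}$ together with the defining property of the Serre functor $S$; geometrically it is the "closing up" of the open chain from $E_{a_p}$ back to $E_{a_0}$ through the Serre twist. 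The bar differential contributes a homological shift by $-p$, so such a summand lies in total degree equal to the sum of the internal $Ext$-degrees minus $p$.

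Then the degree bookkeeping finishes the argument. By the definition of the relative heights $e(E,E')$, the lowest total degree in which the summand above is nonzero is precisely
\[ e(E_{a_0},E_{a_1}) + \cdots + e(E_{a_{p-1}},E_{a_p}) + e(E_{a_p}, S^{-1}(E_{a_0})) - p, \]
and minimizing over all strictly increasing chains gives exactly $ph(E_1, \cdots, E_n)$. Hence the first page of the spectral sequence vanishes in all total degrees $< ph(E_1, \cdots, E_n)$; since the spectral sequence converges to $NHH^*(\mathcal{E},\mathcal{D})$ and differentials cannot create classes in degrees where the page is already zero, we obtain $NHH^k(\mathcal{E},\mathcal{D}) = 0$ for every $k < ph(E_1, \cdots, E_n)$, and therefore $h(E_1, \cdots, E_n) = \min\{k : NHH^k(\mathcal{E},\mathcal{D}) \neq 0\} \geq ph(E_1, \cdots, E_n)$.

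The main obstacle is the second step: correctly constructing the spectral sequence and, above all, identifying its first page — in particular pinning down how the dual object $\mathcal{D}_{\mathcal{E}}^{\vee}$ produces the Serre-twisted closing factor $Ext^\bullet(E_{a_p}, S^{-1}(E_{a_0}))$, and keeping the homological shift by $-p$ consistent throughout the bookkeeping. Once the first page is described this cleanly, both the vanishing below degree $ph$ and the passage to the limit via convergence are formal, and the inequality (rather than an equality) reflects the fact that the minimal-degree terms on the first page may still be killed by later differentials.
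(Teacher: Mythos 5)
Your proposal is correct and is essentially the same argument the paper relies on: the paper simply cites Kuznetsov's result that there is a spectral sequence converging to $NHH^*(\mathcal{E},\mathcal{D})$ whose first page has minimal nonzero total degree equal to the pseudoheight, and your bar-resolution construction of that spectral sequence, the chain-indexed first page with the Serre-twisted closing factor $Ext^\bullet(E_{a_p},S^{-1}(E_{a_0}))$ and the shift by $-p$, and the convergence argument reproduce precisely that proof. No gap; the inequality (rather than equality) for exactly the reason you state, namely that later differentials may kill the lowest-degree classes.
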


Finally he proved that for some cases it is easy to determine the pseudoheight.

\begin{defi} \cite{Ku2} Let $E_1, \cdots E_n$ be an exceptional sequence on $S$. \\
(1) $E_1, \cdots E_n,E_1 \otimes \omega_S^{-1}, \cdots E_n \otimes \omega_S^{-1}$ is called anticanonically extended collection. \\
(2) $E_1, \cdots E_n,E_1 \otimes \omega_S^{-1}, \cdots E_n \otimes \omega_S^{-1}$ is called Hom-free if $Ext^k(E_i,E_j)$ for $k \leq 0$ and all $i < j \leq i+n$. \\
(3) A Hom-free anticanonically extended sequence is cyclically $Ext^1$-connected if there is a chain $1 \leq a_0 < a_1 < \cdots < a_p \leq n$ such that $Ext^1(E_{a_s},E_{a_{s+1}}) \neq 0$, for $s=0,1,\cdots, p-1$ and $Ext^1(E_{a_p},E_{a_0} \otimes \omega^{-1}) \neq 0.$
\end{defi}

When $E_1, \cdots E_n,E_1 \otimes \omega_S^{-1}, \cdots E_n \otimes \omega_S^{-1}$ is Hom-free and not cyclically $Ext^1$-connected the following two lemmas enable us to compute the height of the exceptional sequence.

\begin{lem}\cite{Ku2} If $E_1, \cdots E_n$ is an exceptional sequence such that its anticanonically extended sequence is Hom-free then $ph(E_1, \cdots E_n) \geq 1 + dim S$. If this sequence is not cyclically $Ext^1$-connected then $ph(E_1, \cdots E_n) \geq 2 + dim S$.
\end{lem}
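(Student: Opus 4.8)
The plan is to argue directly from the combinatorial definition of the pseudoheight together with Serre duality and the two hypotheses; the spectral sequence of Kuznetsov that computes $NHH^*$ is \emph{not} needed here, since $ph(E_1,\cdots,E_n)$ is defined purely numerically in terms of the relative heights $e(E,E')$. First I would unwind the Serre functor. On a smooth projective variety of dimension $\dim S$ the Serre functor is $(-)\otimes\omega_S[\dim S]$, so its inverse sends $E_{a_0}$ to $E_{a_0}\otimes\omega_S^{-1}[-\dim S]$. Since $Ext^k(E_{a_p},E_{a_0}\otimes\omega_S^{-1}[-\dim S]) = Ext^{k-\dim S}(E_{a_p},E_{a_0}\otimes\omega_S^{-1})$, the last term occurring in the pseudoheight satisfies
$$ e(E_{a_p},S^{-1}(E_{a_0})) = \dim S + e(E_{a_p},E_{a_0}\otimes\omega_S^{-1}). $$
Writing $E_{n+i} := E_i\otimes\omega_S^{-1}$ for the entries of the anticanonically extended collection, this becomes $\dim S + e(E_{a_p},E_{n+a_0})$, i.e. the relative height of a genuine pair of the extended collection.

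Next I would substitute this into the definition and pull the constant $\dim S$ out of the minimum. For a chain $1 \leq a_0 < a_1 < \cdots < a_p \leq n$ the quantity being minimized equals
$$ \dim S + \Big( \sum_{s=1}^{p} e(E_{a_{s-1}},E_{a_s}) + e(E_{a_p},E_{n+a_0}) - p \Big), $$
which displays a sum of exactly $p+1$ relative heights minus $p$. The key point is that each of these $p+1$ edges $(i,j)$ lies in the range $i<j\le i+n$ covered by the Hom-free hypothesis: for the internal edges $a_{s-1}<a_s\le n<a_{s-1}+n$, and for the wrap-around edge $a_p<n+a_0\le a_p+n$ (using $1\le a_0\le a_p$). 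Hom-freeness then gives $Ext^{\le 0}=0$ on each edge, i.e. $e\ge 1$ for all $p+1$ terms, so the parenthesized expression is at least $(p+1)-p=1$. Minimizing over all chains yields $ph(E_1,\cdots,E_n)\ge 1+\dim S$.

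For the sharper bound I would invoke the failure of cyclic $Ext^1$-connectedness. By definition the extended sequence is not cyclically $Ext^1$-connected exactly when no chain has all $p+1$ of its consecutive $Ext^1$ groups nonzero; equivalently, for \emph{every} chain at least one of its $p+1$ edges $(i,j)$ has $Ext^1(E_i,E_j)=0$. Together with Hom-freeness ($Ext^{\le 0}=0$ on that edge) this forces $e\ge 2$ on that edge, while the remaining $p$ edges still satisfy $e\ge 1$. Hence for every chain the sum of the $p+1$ relative heights is at least $p+2$, the parenthesized expression is at least $2$, and taking the minimum gives $ph(E_1,\cdots,E_n)\ge 2+\dim S$.

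The only genuinely delicate point, and the one I would check most carefully, is the index bookkeeping ensuring that the estimate applies on \emph{every} edge of every chain: one must confirm that the wrap-around pair $(E_{a_p},E_{n+a_0})$, and in the degenerate length-one case $p=0$ the single pair $(E_{a_0},E_{a_0}\otimes\omega_S^{-1})$, really lie in the index range $i<j\le i+n$ where both Hom-freeness and the $Ext^1$-connectedness condition are formulated, so that the bounds $e\ge 1$ (respectively $e\ge 2$) are legitimate there. Once this is verified, everything reduces to the elementary count of how many summands are bounded below by $1$ and how many by $2$.
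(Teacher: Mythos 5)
Your argument is correct: the translation of the Serre-functor term via $e(E_{a_p},S^{-1}(E_{a_0}))=\dim S+e(E_{a_p},E_{a_0}\otimes\omega_S^{-1})$, the identification of all $p+1$ edges of a chain (including the wrap-around edge and the degenerate $p=0$ chain) with pairs in the range $i<j\le i+n$ where Hom-freeness applies, and the count giving $\ge 1$ per edge, respectively $\ge 2$ on at least one edge when the sequence is not cyclically $Ext^1$-connected, are all handled properly. Note that the paper itself gives no proof of this statement --- it is quoted from Kuznetsov \cite{Ku2} --- and your argument is essentially Kuznetsov's original one, so it agrees with the cited source rather than offering a genuinely different route.
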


\begin{lem}\cite{Ku2}
Let $S$ be a smooth projective surface with $H^2(S,\omega_S^{-1}) \neq 0$. If $E_1, \cdots E_n$ is an exceptional sequence of line bundles then $ph(E_1, \cdots E_n) \leq 4$. Moreover if $ph(E_1, \cdots E_n) = 4$ then $h(E_1, \cdots E_n) = 4$.
\end{lem}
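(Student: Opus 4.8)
The plan is to treat the two assertions separately: the bound $ph \leq 4$ by exhibiting a single short chain that already forces the pseudoheight down, and the implication $ph=4 \Rightarrow h=4$ by combining the general inequality $h \geq ph$ with a survival argument for a distinguished class on the spectral sequence computing $NHH^{*}$.

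For $ph(E_1,\dots,E_n) \leq 4$ I would look only at the length-one chains, i.e. the $p=0$ terms in the definition of pseudoheight. For a single index $a_0$ this term equals $e(E_{a_0},S^{-1}(E_{a_0}))$. Since the Serre functor of a surface is $(-)\otimes\omega_S[2]$, we have $S^{-1}(E_{a_0}) \cong E_{a_0}\otimes\omega_S^{-1}[-2]$, and because $E_{a_0}$ is a line bundle, $Ext^k(E_{a_0},S^{-1}(E_{a_0})) \cong H^{k-2}(S,\omega_S^{-1})$. The hypothesis $H^2(S,\omega_S^{-1})\neq 0$ makes this group nonzero for $k=4$, so $e(E_{a_0},S^{-1}(E_{a_0})) \leq 4$, and since the pseudoheight is a minimum over all chains, $ph(E_1,\dots,E_n)\leq 4$. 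Note that this half uses neither exceptionality nor the length of the sequence, only the single object $E_{a_0}$ and the nonvanishing of $H^2(S,\omega_S^{-1})$.

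For the second assertion, the previous lemma gives $h \geq ph = 4$, so it remains to prove $NHH^4(\mathcal{E},\mathcal{D})\neq 0$. Here I would use the spectral sequence converging to $NHH^{*}$ whose first page consists of the chain terms $Ext^{s_0}(E_{a_0},E_{a_1})\otimes\cdots\otimes Ext^{s_p}(E_{a_p},S^{-1}(E_{a_0}))$ in total degree $s_0+\cdots+s_p-p$, and for which, as recalled above, $ph$ is exactly the minimal total degree carrying a nonzero first-page term. Since $ph=4$, every nonzero first-page term sits in total degree $\geq 4$, so everything in total degree $\leq 3$ vanishes on all pages. On the other hand the $p=0$ summand in total degree $4$ is $\bigoplus_i Ext^4(E_i,S^{-1}(E_i)) \cong \bigoplus_i H^2(S,\omega_S^{-1})$, which is nonzero; indeed $ph=4$ forces $e(E_i,S^{-1}(E_i))=4$ for every $i$, so total degree $4$ is precisely where these summands begin. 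The heart of the matter is that this class survives to $E_\infty$: the differentials are induced by composition of $Ext$-classes, which merges two adjacent factors and hence strictly lowers the chain length $p$ while raising total degree by one. The $p=0$ summands lie at the extreme end of the chain-length filtration, so no differential leaves them, and any differential reaching them originates from a chain of length $\geq 1$ in total degree $3$, which is zero because $ph=4$. Thus $\bigoplus_i H^2(S,\omega_S^{-1})$ is neither hit nor killed, it persists to $E_\infty$, and $NHH^4(\mathcal{E},\mathcal{D})\neq 0$, giving $h\leq 4$ and hence $h=4$.

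I expect the main obstacle to be the bookkeeping of the spectral sequence: pinning down that its differentials lower chain length and raise total degree by one, so that the $p=0$ row is a filtration extreme with no outgoing differential and only incoming differentials from total degree $3$. Once the direction of the differentials and the identification of $ph$ with the minimal total degree of a nonzero first-page term are in hand, the survival of the canonical $H^2(S,\omega_S^{-1})$-summands is automatic, and this is the one place where the dimension-two hypothesis and $H^2(S,\omega_S^{-1})\neq 0$ enter decisively.
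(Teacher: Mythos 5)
This lemma is quoted in the paper directly from Kuznetsov \cite{Ku2} with no proof given, so there is no in-paper argument to compare against; judged against Kuznetsov's original proof, your proposal is correct and follows essentially the same route. Both halves match his argument: the bound $ph \le 4$ comes from the $p=0$ chain term, since $Ext^4(E_i, S^{-1}E_i) \cong H^2(S,\omega_S^{-1}) \neq 0$ for a line bundle on a surface, and the equality $h=4$ when $ph=4$ comes from the survival of the classes in $\bigoplus_i Ext^4(E_i,S^{-1}E_i)$ on the chain-length spectral sequence --- they sit in the extreme column, so support no outgoing differentials, and can only receive differentials from total degree $3$, which vanishes identically because $ph=4$; your assertion that the differentials strictly decrease chain length while raising total degree by one is indeed how the spectral sequence of \cite{Ku2} behaves, so the argument is sound.
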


Now we compute the heights of our exceptional sequences.

\begin{prop}
The pseudoheight of the exceptional collection $ \mathcal{O}_X(\chi_1), \mathcal{O}_X(E_2-2E_1)(\chi_2),$ $\mathcal{O}_X(F_2-2F_1)(\chi_3), \mathcal{O}_X(E_2-2E_1+F_2-2F_1)(\chi_4) $ is 4 and the height is 4.
\end{prop}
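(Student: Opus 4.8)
The plan is to pin the pseudoheight down to exactly $4$ by squeezing it between two bounds coming from the lemmas of Kuznetsov recalled above, and then to read off the height. For the upper bound I would apply the last lemma (the one for surfaces with $H^2(S,\omega_S^{-1})\neq 0$): it gives $ph\leq 4$ together with the implication $ph=4\Rightarrow h=4$. To check its hypothesis, note that $S$ is minimal of general type with $K_S^2=8$ and $p_g=0$, so Riemann--Roch gives $\chi(\mathcal{O}_S(2K_S))=1+K_S^2=9$ while $h^2(2K_S)=h^0(-K_S)=0$; hence $h^0(2K_S)>0$ and Serre duality yields $H^2(S,\omega_S^{-1})\cong H^0(S,\omega_S^{\otimes 2})^{*}\neq 0$. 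For the matching lower bound I would verify that the anticanonically extended collection is Hom-free and is \emph{not} cyclically $Ext^1$-connected; the corresponding lemma then forces $ph\geq 2+\dim S=4$, and combining the two estimates gives $ph=4$ and $h=4$.

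The one genuinely new geometric input I would record first is the canonical class of $C$ in terms of the orbits. Since $\pi\colon C\to\mathbb{P}^1$ has degree $9$ and is totally ramified of index $3$ along each orbit $E_i$, the Hurwitz formula gives $K_C\sim \pi^{*}\mathcal{O}(-2)+2(E_1+E_2+E_3+E_4)$; writing $\pi^{*}\mathcal{O}(1)\sim 3E_1\sim 3E_2$ and using $E_3\sim E_1$, $E_4\sim E_2$ from the earlier lemma, this simplifies to $\omega_C\cong\mathcal{O}_C(E_1+E_2)$, and likewise $\omega_D\cong\mathcal{O}_D(F_1+F_2)$. Consequently $\omega_S^{-1}$ pulls back, up to a character, to $\mathcal{O}_X(-E_1-E_2-F_1-F_2)$. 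With this identification both remaining conditions become cohomology computations on $X=C\times D$ via the K\"{u}nneth formula, and it suffices to work on $X$ ignoring the characters, since $H^{*}(S,\mathcal{L})=H^{*}(X,\widetilde{\mathcal{L}})^{G}$ vanishes whenever $H^{*}(X,\widetilde{\mathcal{L}})$ does. Writing $A_1=\mathcal{O}_X$, $A_2=\mathcal{O}_X(E_2-2E_1)$, $A_3=\mathcal{O}_X(F_2-2F_1)$, $A_4=\mathcal{O}_X(E_2-2E_1+F_2-2F_1)$ (characters suppressed), the extended collection is $A_1,\dots,A_4,A_1\otimes\omega_S^{-1},\dots,A_4\otimes\omega_S^{-1}$.

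For Hom-freeness I would run through the differences $B_j\otimes B_i^{-1}$ with $i<j\leq i+4$ of this doubled collection. Each pulls back to a product $N_C\boxtimes N_D$ in which at least one factor lies among $\mathcal{O}_C(E_2-2E_1)$, $\mathcal{O}_C(2E_1-E_2)$, $\mathcal{O}_C(-3E_1)$, $\mathcal{O}_C(E_1-2E_2)$, $\omega_C^{-1}$ (or their $D$-analogues), and each of these has $h^0=0$, either by negativity of degree or by Lemma~3.8 in the case of $\mathcal{O}_C(2E_1-E_2)$. Hence $H^0(X,N_C\boxtimes N_D)=H^0(C,N_C)\otimes H^0(D,N_D)=0$, so every relevant $Hom$ vanishes and the collection is Hom-free.

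The crux is the non-cyclic condition, and the pleasant point I would emphasize is that \emph{every} possible closing arrow vanishes identically, independently of the characters. Indeed $Ext^1(A_{a_p},A_{a_0}\otimes\omega_S^{-1})=H^1(S,A_{a_0}\otimes\omega_S^{-1}\otimes A_{a_p}^{-1})$, and the pulled-back bundle $M_C\boxtimes M_D$ has $C$-factor $M_C\in\{\omega_C^{-1},\mathcal{O}_C(-3E_1),\mathcal{O}_C(E_1-2E_2)\}$ and $M_D$ of the analogous shape, all of negative degree, so $h^0(M_C)=h^0(M_D)=0$. By K\"{u}nneth,
\[ H^1(X,M_C\boxtimes M_D)=H^1(C,M_C)\otimes H^0(D,M_D)\oplus H^0(C,M_C)\otimes H^1(D,M_D)=0. \]
Thus no chain can ever be closed up, and the extended collection is not cyclically $Ext^1$-connected. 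I expect this simultaneous vanishing of all closing $Ext^1$'s---rather than the routine Hom-freeness---to be the main point; it is exactly what upgrades the bound to $ph\geq 2+\dim S=4$, so that $ph=4$, and then the upper-bound lemma gives $h=4$.
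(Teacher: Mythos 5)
Your proposal is correct, and its skeleton is the same as the paper's: verify that the anticanonically extended collection is Hom-free and not cyclically $Ext^1$-connected, then combine Kuznetsov's lower-bound lemma ($ph \geq 2+\dim S$) with his upper-bound lemma for surfaces with $H^2(S,\omega_S^{-1})\neq 0$ to get $ph=h=4$. Where you genuinely diverge is in the treatment of the cyclic-connectedness condition. The paper dispatches it in one line ``by Serre duality and Kodaira vanishing'': the closing arrow $Ext^1(E_{a_p},E_{a_0}\otimes\omega_S^{-1})$ is Serre-dual to $H^1\bigl(S,\omega_S\otimes(\omega_S\otimes E_{a_p}\otimes E_{a_0}^{-1})\bigr)$, and $\omega_S\otimes E_{a_p}\otimes E_{a_0}^{-1}$ is ample because its pullback to $X=C\times D$ has positive degree on both factors. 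You instead kill every closing arrow directly on the cover: each relevant bundle pulls back to $M_C\boxtimes M_D$ with both factors of negative degree, so $h^0(M_C)=h^0(M_D)=0$ and the K\"{u}nneth formula forces $H^1(X,M_C\boxtimes M_D)=0$, hence the $G$-invariant part on $S$ vanishes too. Your route is more elementary (no Kodaira vanishing, no ampleness check) and stays entirely within the K\"{u}nneth-plus-degree framework used elsewhere in the paper; it also produces two details the paper leaves implicit, namely the identification $\omega_C\cong\mathcal{O}_C(E_1+E_2)$ via Hurwitz and the verification that $H^2(S,\omega_S^{-1})\neq 0$ (which indeed holds for any minimal surface of general type, since $h^0(2K_S)\geq\chi(\mathcal{O}_S)+K_S^2=9>0$). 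The paper's argument is shorter and avoids enumerating the closing differences; yours is self-contained and makes the character-independence of the vanishing transparent. One cosmetic remark: ``totally ramified of index $3$'' should read ``ramified with index $3$ at each point of $E_i$'' (total ramification would mean a one-point fiber), but the Hurwitz computation you perform is the correct one.
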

\begin{proof}
It is enough to show that the exceptional sequence is Hom-free and not cyclically $Ext^1$-connected. From the K\"{u}nneth formula and degree computation we find that $ \mathcal{O}_X(\chi_1), \mathcal{O}_X(E_2-2E_1)(\chi_2), \mathcal{O}_X(F_2-2F_1)(\chi_3), \mathcal{O}_X(E_2-2E_1+F_2-2F_1)(\chi_4), \mathcal{O}_X(\chi_1) \otimes \omega_S^{-1}, \mathcal{O}_X(E_2-2E_1)(\chi_2) \otimes \omega_S^{-1}, \mathcal{O}_X(F_2-2F_1)(\chi_3) \otimes \omega_S^{-1}, \mathcal{O}_X(E_2-2E_1+F_2-2F_1)(\chi_4) \otimes \omega_S^{-1} $ is Hom-free. This sequence cannot be cyclically $Ext^1$-connected by Serre duality and Kodaira vanishing theorem.
\end{proof}

Therefore we get the following consequence about the Hochschild cohomologies of the orthogonal complements of our exceptional sequences.

\begin{cor}
Let $\mathcal{A}$ be the orthogonal complement of the exceptional collection $ \mathcal{O}_X(\chi_1),$ $\mathcal{O}_X(E_2-2E_1)(\chi_2), \mathcal{O}_X(F_2-2F_1)(\chi_3), \mathcal{O}_X(E_2-2E_1+F_2-2F_1)(\chi_4) $. Then we have $ HH^i(S) = HH^i(\mathcal{A})$, for $i=0,1,2 $, and $ HH^3(S) \subset HH^3(\mathcal{A}) $.
\end{cor}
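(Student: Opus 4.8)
The plan is to deduce this statement directly from the Proposition just proved, combined with Kuznetsov's theorem relating the height of an exceptional sequence to the restriction of Hochschild cohomology. The four line bundles $\mathcal{O}_X(\chi_1)$, $\mathcal{O}_X(E_2-2E_1)(\chi_2)$, $\mathcal{O}_X(F_2-2F_1)(\chi_3)$, $\mathcal{O}_X(E_2-2E_1+F_2-2F_1)(\chi_4)$ form an exceptional sequence of maximal length on $S$, and hence yield a semiorthogonal decomposition $D^b(S) = \langle \mathcal{A}, E_1, E_2, E_3, E_4 \rangle$ in which $\mathcal{A}$ is precisely the orthogonal complement under consideration. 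The entire Corollary is then a formal reading-off of numerical consequences, so no new geometric input is required.

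First I would record that, by the preceding Proposition, the height of this exceptional sequence equals $4$; this is the only substantive fact being used, and it was already established there by verifying that the anticanonically extended sequence is Hom-free and not cyclically $Ext^1$-connected. Next I would invoke the theorem of Kuznetsov stated above, which asserts that for a semiorthogonal decomposition with orthogonal complement $\mathcal{A}$ and height $h$, the canonical restriction morphism is an isomorphism in degrees $k \leq h-2$ and a monomorphism in degree $k = h-1$. I would apply it with the surface $S$ playing the role of the ambient variety and with the decomposition coming from our sequence of line bundles on $S$.

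Substituting $h = 4$ gives $h-2 = 2$ and $h-1 = 3$. The theorem therefore yields that the restriction map $HH^k(S) \to HH^k(\mathcal{A})$ is an isomorphism for $k = 0, 1, 2$, which is exactly the asserted equalities $HH^i(S) = HH^i(\mathcal{A})$ for $i = 0, 1, 2$, and that it is a monomorphism for $k = 3$, giving the inclusion $HH^3(S) \subset HH^3(\mathcal{A})$. This completes the argument.

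There is no genuine obstacle here, since all the real work lies in the height computation carried out in the Proposition. The only point requiring mild care is notational: Kuznetsov's theorem is phrased for a variety called $X$, whereas in this paper $X$ denotes the product $C \times D$; I would make explicit that the theorem is being applied to the surface $S$ together with the decomposition of $D^b(S)$ by the exceptional collection, not to the product. Once this identification is noted, the conclusion follows immediately.
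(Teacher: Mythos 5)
Your proposal is correct and follows exactly the paper's (implicit) argument: the Corollary is stated as an immediate consequence of the preceding Proposition that the height equals $4$, combined with Kuznetsov's theorem that the restriction morphism $HH^k(S) \to HH^k(\mathcal{A})$ is an isomorphism for $k \leq h-2$ and a monomorphism for $k = h-1$. Your added remark distinguishing the surface $S$ from the product $X = C \times D$ when applying Kuznetsov's theorem is a sensible clarification but involves no new mathematical content.
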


\subsection{Quasiphantom categories and phantom categories}

We recall the definitions of quasiphantom and phantom category.

\begin{defi}{\cite[Definition 1.8]{GO}}
Let $S$ be a smooth projective variety. Let $\mathcal{A}$ be an admissible triangulated subcategory of $D^b(S)$. Then $\mathcal{A}$ is called a quasiphantom category if the Hochschild homology of $\mathcal{A}$ vanishes, and the Grothendieck group of $\mathcal{A}$ is finite. If the Grothendieck group of $\mathcal{A}$ also vanishes, then $\mathcal{A}$ is called a phantom category.
\end{defi}

We now prove the second part of our main theorem.

\begin{prop}
Let $\mathcal{A}$ be the left orthogonal complement of the admissible category generated by $\mathcal{O}_X(\chi_1)$, $\mathcal{O}_X(E_2-2E_1)(\chi_2)$, $\mathcal{O}_X(F_2-2F_1)(\chi_3)$, $\mathcal{O}_X(E_2-2E_1+F_2-2F_1)(\chi_4)$. Then we have $K(\mathcal{A}) = (\mathbb{Z}/3)^5$ and $ HH_*(\mathcal{A}) = 0$. Therefore $\mathcal{A}$ is a quasiphantom category.
\end{prop}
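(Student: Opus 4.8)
The plan is to obtain both assertions from additivity over the semiorthogonal decomposition
$$ D^b(S) = \langle \mathcal{A}, \mathcal{O}_X(\chi_1), \mathcal{O}_X(E_2-2E_1)(\chi_2), \mathcal{O}_X(F_2-2F_1)(\chi_3), \mathcal{O}_X(E_2-2E_1+F_2-2F_1)(\chi_4) \rangle, $$
which is legitimate because the four line bundles form an exceptional sequence, hence generate an admissible subcategory whose left orthogonal complement $\mathcal{A}$ is admissible. Beyond this, I will only use the numerical invariants $p_g=q=0$ and $b_2=2$ of $S$ established above.

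For the Hochschild homology, first I would compute $HH_*(S)$ via the Hochschild--Kostant--Rosenberg isomorphism. Since $p_g=q=0$, Hodge symmetry and Serre duality leave only the nonzero Hodge numbers $h^{0,0}=h^{2,2}=1$ and $h^{1,1}=b_2=2$; all off-diagonal groups vanish, so $HH_t(S)=0$ for $t\neq0$ and $HH_0(S)\cong\mathbb{C}^4$. Applying Kuznetsov's additivity of Hochschild homology to the decomposition above, together with the fact that each exceptional line bundle contributes exactly $\mathbb{C}$ concentrated in degree $0$, yields $HH_*(S)\cong HH_*(\mathcal{A})\oplus\mathbb{C}^4$. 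Comparing dimensions in each degree then forces $HH_*(\mathcal{A})=0$.

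For the Grothendieck group, I would use the analogous additivity $K(S)\cong K(\mathcal{A})\oplus\bigoplus_{i=1}^{4}K(\langle E_i\rangle)$ for semiorthogonal decompositions, with $K(\langle E_i\rangle)\cong\mathbb{Z}$ for each exceptional object. By the lemma of $\S2$ computing $K(S)\cong\mathbb{Z}^2\oplus Pic(S)$, and the identification $Pic(S)\cong\mathbb{Z}^2\oplus(\mathbb{Z}/3)^5$ in the $G=(\mathbb{Z}/3)^2$ case, we have $K(S)\cong\mathbb{Z}^4\oplus(\mathbb{Z}/3)^5$, hence $K(\mathcal{A})\oplus\mathbb{Z}^4\cong\mathbb{Z}^4\oplus(\mathbb{Z}/3)^5$.

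The only step requiring any care is the final abelian-group extraction. Comparing free ranks shows $K(\mathcal{A})$ has rank $0$, so it is finite, and its torsion subgroup then agrees with that of $\mathbb{Z}^4\oplus(\mathbb{Z}/3)^5$; thus $K(\mathcal{A})\cong(\mathbb{Z}/3)^5$. This uses the uniqueness in the structure theorem for finitely generated abelian groups rather than a canonical splitting, but with the invariants in hand it is immediate. Since $K(\mathcal{A})$ is finite and $HH_*(\mathcal{A})=0$, the category $\mathcal{A}$ is a quasiphantom by the definition recalled above. I expect no genuine obstacle here: the entire argument is bookkeeping once the additivity theorems and the invariants $p_g=q=0$, $b_2=2$, $Pic(S)\cong\mathbb{Z}^2\oplus(\mathbb{Z}/3)^5$ are in place.
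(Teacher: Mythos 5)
Your proposal is correct and follows essentially the same route as the paper: additivity of Grothendieck groups and of Hochschild homology over the semiorthogonal decomposition, the Hochschild--Kostant--Rosenberg isomorphism giving $\dim HH_*(S)=4$, and Kuznetsov's result that each exceptional object contributes $\mathbb{C}$. The only difference is presentational: you spell out the degree-by-degree comparison for $HH_*$ and the rank/torsion bookkeeping for $K(\mathcal{A})$, which the paper leaves implicit.
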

\begin{proof}
Additivity of Grothendieck groups with respect to semiorthogonal decomposition implies that $ K(\mathcal{A}) = (\mathbb{Z}/3)^5.$ From the Hochschild-Kostant-Rosenberg isomorphism the total dimension of Hochschild homology of $S$ is the sum of Betti number of $S$ which is $4$. From our construction and Kuznetsov's theorem, we get the total dimension of the Hochschild homology of the admissible subcategory generated by exceptional sequence above is also $4$. Vanishing of Hochschild homology of $\mathcal{A}$ follows directly from Kuznetsov's theorem.
\end{proof}

Gorchinskiy and Orlov proved the following theorem and constructed phantom categories using the quasiphantom categories constructed in {\cite{AO}}, {\cite{BBS}}, {\cite{GS}}.

\begin{thm}{\cite[Theorem 1.12]{GO}}
Let $S$, $S'$ be smooth surfaces with $p_g=q=0$ for which Bloch's conjecture holds. Assume that the derived categories $D^b(S)$ and $D^b(S')$ have exceptional collections of maximal lengths. Let $\mathcal{A} \subset D^b(S)$ and $\mathcal{A'} \subset D^b(S')$ be the left orthogonals to these exceptional collections. If the orders of $Pic(S)_{tors}$ and $Pic(S')_{tors}$ are coprime, then the admissible subcategory $\mathcal{A} \boxtimes \mathcal{A}' \subset D^b(S \times S')$ is a phantom category.
\end{thm}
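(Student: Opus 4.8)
The plan is to check the three defining properties of a phantom category for $\mathcal{B} := \mathcal{A} \boxtimes \mathcal{A}'$ in turn: admissibility, vanishing of Hochschild homology, and vanishing of the Grothendieck group. The first two are formal, while the last carries all the content and is where both the coprimality hypothesis and Bloch's conjecture enter. Write $D^b(S) = \langle \mathcal{A}, E_1, \dots, E_n \rangle$ and $D^b(S') = \langle \mathcal{A}', E_1', \dots, E_m' \rangle$ for the given decompositions, where $n = \mathrm{rank}\, K(S)$ and $m = \mathrm{rank}\, K(S')$ by maximality. Kuznetsov's results on semiorthogonal decompositions of products produce an induced decomposition
$$ D^b(S \times S') = \langle \mathcal{A} \boxtimes \mathcal{A}', \ \{\mathcal{A} \boxtimes \langle E_j' \rangle\}_j, \ \{\langle E_i \rangle \boxtimes \mathcal{A}'\}_i, \ \{\langle E_i \rangle \boxtimes \langle E_j' \rangle\}_{i,j} \rangle $$
with all components admissible, so $\mathcal{B}$ is admissible. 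Since $\mathcal{A}$ is a quasiphantom, $HH_*(\mathcal{A}) = 0$, and the K\"{u}nneth formula for Hochschild homology of dg-categories gives $HH_*(\mathcal{B}) \cong HH_*(\mathcal{A}) \otimes HH_*(\mathcal{A}') = 0$.

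For the Grothendieck group I would apply additivity of $K_0$ to the decomposition above. Using $\langle E_i \rangle \boxtimes \langle E_j' \rangle \cong D^b(\mathrm{pt})$, $\mathcal{A} \boxtimes \langle E_j' \rangle \cong \mathcal{A}$ and $\langle E_i \rangle \boxtimes \mathcal{A}' \cong \mathcal{A}'$, this yields
$$ K(S \times S') \cong K(\mathcal{B}) \oplus K(\mathcal{A})^{\oplus m} \oplus K(\mathcal{A}')^{\oplus n} \oplus \mathbb{Z}^{nm}. $$
Write $T = Pic(S)_{tors} \cong K(\mathcal{A})$ and $T' = Pic(S')_{tors} \cong K(\mathcal{A}')$; these are finite of coprime order, so $T \otimes T' = 0$ and $\mathrm{Tor}(T, T') = 0$. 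The strategy is to compute $K(S \times S')$ independently and match: if it equals $\mathbb{Z}^{nm} \oplus T^{\oplus m} \oplus T'^{\oplus n}$, then cancelling the common finitely generated summand (valid for finitely generated abelian groups) forces $K(\mathcal{B}) = 0$.

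To compute $K(S \times S')$ I would first run the topological model. For these surfaces $H^1(S) = 0$ and $H^3(S) \cong H_1(S) = T$ is all torsion, so $K^0_{top}(S) \cong \mathbb{Z}^n \oplus T$ and $K^1_{top}(S) \cong T$, and likewise for $S'$. In the topological K\"{u}nneth short exact sequence for $K^0_{top}(S \times S')$, coprimality kills the cross term $T \otimes T'$ inside $K^0(S) \otimes K^0(S')$, the term $K^1(S) \otimes K^1(S') = T \otimes T'$, and both groups $\mathrm{Tor}(K^0(S), K^1(S'))$ and $\mathrm{Tor}(K^1(S), K^0(S'))$, leaving exactly
$$ K^0_{top}(S \times S') \cong \mathbb{Z}^{nm} \oplus T^{\oplus m} \oplus T'^{\oplus n}. $$
This is precisely the group the additivity formula demands, so the proof reduces to comparing the algebraic and topological $K$-theory of the fourfold $S \times S'$.

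The hard part is exactly this comparison: that the natural map $K(S \times S') \to K^0_{top}(S \times S')$ is an isomorphism, or equivalently that $CH^2(S \times S')$ carries no torsion beyond what is seen topologically. This is where Bloch's conjecture for $S$ and $S'$ is indispensable. It gives $CH_0(S) \cong CH_0(S') \cong \mathbb{Z}$, hence finite-dimensionality of the motives of $S$ and $S'$, and therefore a Chow--K\"{u}nneth decomposition of the motive of $S \times S'$ that computes its Chow groups from those of the factors. This rules out any unexpected transcendental or torsion contribution to $CH^2(S \times S')$ and pins down $K(S \times S')$ integrally. Granting the comparison, the two computations of $K(S \times S')$ agree summand by summand, so $K(\mathcal{B}) = 0$ and $\mathcal{B} = \mathcal{A} \boxtimes \mathcal{A}'$ is a phantom category.
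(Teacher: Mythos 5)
First, a point of context: the paper you are working from does not prove this theorem at all --- it is quoted as a black box from Gorchinskiy--Orlov \cite{GO}, so the only meaningful comparison is with the proof in \cite{GO}. Your overall architecture does match theirs: the induced semiorthogonal decomposition of $D^b(S \times S')$ with all pieces admissible, additivity of $K_0$ along it, the K\"unneth formula $HH_*(\mathcal{A} \boxtimes \mathcal{A}') \cong HH_*(\mathcal{A}) \otimes HH_*(\mathcal{A}')$, the identification $K(\mathcal{A}) \cong Pic(S)_{tors}$ via cancellation, and the final cancellation of finitely generated summands are all correct steps (modulo the unproved assertion that $HH_*(\mathcal{A})=0$, which does follow from HKR, additivity, and maximality of the collection, and modulo possible extension problems in the Atiyah--Hirzebruch filtration when you claim $K^0_{top}(S) \cong \mathbb{Z}^n \oplus T$).

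The genuine gap is exactly the step you label ``the hard part'': the claim that the comparison map $K(S \times S') \to K^0_{top}(S \times S')$ is an isomorphism, justified by ``finite-dimensionality of the motives'' and ``a Chow--K\"unneth decomposition.'' These are tools with $\mathbb{Q}$-coefficients. For a surface with $p_g=q=0$, Bloch's conjecture does imply the rational motive is of Lefschetz type, but any such rational decomposition is blind to torsion --- and every group at stake in this theorem ($T$, $T'$, and the would-be phantom $K(\mathcal{B})$) is a finite torsion group, so rational motivic statements cannot ``pin down $K(S \times S')$ integrally.'' Moreover, you point at $CH^2(S \times S')$, but for the fourfold $S \times S'$ the dangerous groups are $CH^3$ and $CH^4$, i.e.\ one-cycles and zero-cycles, precisely the groups that Mumford-type phenomena can make enormous; taming them \emph{integrally} is the technical heart of \cite{GO}, accomplished with decomposition-of-the-diagonal (Bloch--Srinivas) arguments made possible by Bloch's conjecture, together with Roitman-type results killing torsion in $CH_0$ (here $q=q'=0$ forces $Alb(S\times S')=0$). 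Note also that the full comparison isomorphism you ask for is stronger than what is needed: since the K\"unneth-type map is block-compatible with the semiorthogonal decomposition, it suffices to prove an integral \emph{surjectivity} statement, because then the block $K(\mathcal{A}) \otimes K(\mathcal{A}') = T \otimes T' = 0$ surjects onto $K(\mathcal{B})$ and coprimality finishes the proof. As written, your argument establishes the topological side correctly but only asserts the bridge back to algebraic $K$-theory, and the tools you invoke for that bridge cannot supply it.
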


The classical Godeaux surface, primary Burniat surfaces and the Beuaville surface are surfaces with $p_g=q=0$ satisfying Bloch's conjecture. Their Picard groups are $\mathbb{Z}^{11} \oplus (\mathbb{Z}/5)$, $\mathbb{Z}^6 \oplus (\mathbb{Z}/2)^6$ and $\mathbb{Z}^4 \oplus (\mathbb{Z}/5)^3$, respectively. Finally they have exceptional sequences of line bundles of maximal lengths({\cite{AO}}, {\cite{BBS}}, {\cite{GS}}). Hence we get the following proposition.

\begin{prop}
Quasiphantom categories of surfaces isogenous to a higher product with $p_g=q=0$, $G=(\mathbb{Z}/3)^2$ and quasiphantom categories constructed in {\cite{AO}}, {\cite{BBS}}, {\cite{GS}} make phantom categories by the above theorem of Gorchinskiy and Orlov.
\end{prop}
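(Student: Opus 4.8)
The plan is to invoke the Gorchinskiy--Orlov theorem stated above directly, taking $S$ to be a surface isogenous to a higher product with $p_g=q=0$ and $G=(\mathbb{Z}/3)^2$, and $S'$ to be in turn each of the three surfaces named in the preceding paragraph. That theorem requires four things of the pair $(S,S')$: both surfaces have $p_g=q=0$; Bloch's conjecture holds for both; both derived categories admit exceptional collections of line bundles of maximal length; and the orders of $Pic(S)_{\mathrm{tors}}$ and $Pic(S')_{\mathrm{tors}}$ are coprime. So the entire argument reduces to verifying these four hypotheses for each of the three pairings and then reading off the conclusion.

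For our surface $S$ all four of the relevant facts are already in hand. It has $p_g=q=0$ by assumption; Bloch's conjecture holds by the result of Kimura \cite{Ki} invoked in the computation of $K(S)$ in $\S$2; an exceptional collection of line bundles of maximal length exists by the construction of $\S$3; and as recorded in $\S$2 we have $Pic(S) \cong \mathbb{Z}^2 \oplus (\mathbb{Z}/3)^5$, so $|Pic(S)_{\mathrm{tors}}| = 3^5$. For the three reference surfaces the first three hypotheses are exactly what is recalled in the paragraph preceding this proposition: the classical Godeaux surface, a primary Burniat surface, and the Beauville surface each have $p_g=q=0$, satisfy Bloch's conjecture, and carry an exceptional sequence of line bundles of maximal length by \cite{BBS}, \cite{AO}, and \cite{GS} respectively. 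Their Picard groups are $\mathbb{Z}^{11}\oplus(\mathbb{Z}/5)$, $\mathbb{Z}^6\oplus(\mathbb{Z}/2)^6$, and $\mathbb{Z}^4\oplus(\mathbb{Z}/5)^3$, so the corresponding torsion orders are $5$, $2^6$, and $5^3$.

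The only remaining point is the coprimality condition, and it is immediate: the prime $3$ divides none of $5$, $2^6=64$, or $5^3=125$, so $3^5$ is coprime to each of these torsion orders. Consequently, for $S'$ equal to any one of the three reference surfaces, all four hypotheses of the Gorchinskiy--Orlov theorem are satisfied, and the admissible subcategory $\mathcal{A}\boxtimes\mathcal{A}' \subset D^b(S\times S')$ is a phantom category; here $\mathcal{A}$ is the left orthogonal of our exceptional collection (the quasiphantom of the previous proposition) and $\mathcal{A}'$ is the left orthogonal of the corresponding exceptional collection on $S'$. There is no genuine obstacle beyond this bookkeeping; the one place to be careful is to extract the torsion orders from the three Picard groups correctly and to confirm that each is prime to $3$, which is what makes the theorem applicable in all three cases simultaneously.
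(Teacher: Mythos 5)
Your proposal is correct and matches the paper's argument exactly: the paper's justification is precisely the paragraph preceding the proposition (the three reference surfaces have $p_g=q=0$, satisfy Bloch's conjecture, carry maximal exceptional collections, and have Picard torsion of orders $5$, $2^6$, $5^3$), combined with the facts about $S$ established in $\S$2--3 and the coprimality of $3^5$ with those orders, exactly as you lay out. No discrepancies to report.
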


\section{Discussions}

The construction of exceptional sequences of line bundles on $S=(C \times D)/G$ of this paper does not extend to the cases where $G=(\mathbb{Z}/2)^3$, $(\mathbb{Z}/2)^4$, $(\mathbb{Z}/5)^2$. For the $G=(\mathbb{Z}/5)^2$ case, $Div(C)^G/{\sim} \cong \mathbb{Z}$ {\cite[Lemma 2.3]{GS}}. (However there are exceptional sequences of line bundles of maximal length due to the construction of Galkin and Shinder.) For the $G=(\mathbb{Z}/2)^3$, $G=(\mathbb{Z}/2)^4$ cases, the following propositions imply that if there is an exceptional sequence of line bundles on $S$ we need another construction to find it.

\begin{prop}
Let $G=(\mathbb{Z}/2)^3$, $X:=C \times D$ and $S:=(C \times D)/G$ be a surface isogenous to a higher product with $p_g=q=0$ of unmixed type. Then $C$ is a curve of genus $3$ and $D$ is a curve of genus $5$ (see {\cite{BC}}). Let $E_1, E_2$ be $2$ linear combinations of set-theoretic orbits on $C$, and $F_1, F_2$ be $2$ linear combinations of set-theoretic orbits on $D$. By abuse of notation, we let $\mathcal{O}_X(E_i)$ (respectively, $\mathcal{O}_X(F_i)$) denote the pullback of $ \mathcal{O}_C(E_i) $ (respectively, $ \mathcal{O}_D(F_i)$) for $i \in \{1,2\}$. For any choice of characters $\chi_1, \chi_2 \in Hom(G,\mathbb{C}^*)$, we can identify the equivariant line bundles $\mathcal{O}_X(E_i+F_i)(\chi_i)$ on $X$ with line bundles on $S$ for $i \in \{1,2\}$. Then $$ \mathcal{O}_X,\mathcal{O}_X(E_1+F_1)(\chi_1),\mathcal{O}_X(E_2+F_2)(\chi_2) $$ cannot be an exceptional sequence on $S$.
\end{prop}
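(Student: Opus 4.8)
The plan is to show that the three vanishing conditions forced by the definition of an exceptional sequence already fail at the level of Euler characteristics, so that no choice of divisors $E_i, F_i$ or characters $\chi_i$ can succeed. Write $L_0 = \mathcal{O}_X$, $L_1 = \mathcal{O}_X(E_1+F_1)(\chi_1)$ and $L_2 = \mathcal{O}_X(E_2+F_2)(\chi_2)$ for the corresponding line bundles on $S$. If these formed an exceptional sequence, then $\mathrm{Hom}^\bullet(L_1,L_0)$, $\mathrm{Hom}^\bullet(L_2,L_0)$ and $\mathrm{Hom}^\bullet(L_2,L_1)$ would all vanish. Each of these equals $H^\bullet(S, L_i^\vee \otimes L_j)$, and since $G$ acts freely on $X$ the bundle on $S$ underlying $\mathcal{L}(\chi)$ pulls back to $\mathcal{L}$ on $X$; hence $\chi(S, L_i^\vee \otimes L_j) = \frac{1}{|G|}\chi(X, \mathcal{L}_i^{-1}\otimes\mathcal{L}_j)$, which is independent of the characters. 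Thus each vanishing forces a purely numerical identity, and these identities are what I will contradict.

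First I would record the degrees. A point stabilizer of a faithful action on a smooth curve is cyclic, as it embeds into the automorphisms of the cotangent line; since the only nontrivial cyclic subgroups of $(\mathbb{Z}/2)^3$ have order $2$, every nontrivial stabilizer has order $2$. Hence every set-theoretic orbit of ramification points on $C$ and on $D$ is a reduced divisor of degree $|G|/2 = 4$, and any $\mathbb{Z}$-linear combination of orbits has degree divisible by $4$. In particular $\deg E_i \equiv 0$ and $\deg F_i \equiv 0 \pmod 4$ for $i \in \{1,2\}$.

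Next I would compute the three Euler characteristics by the K\"{u}nneth formula and Riemann--Roch on the two curves. Since $g_C - 1 = 2$ and $g_D - 1 = 4$, one has $\chi(X, \mathcal{O}_X(A+B)) = (\deg A - 2)(\deg B - 4)$ for a divisor $A$ on $C$ and a divisor $B$ on $D$. The three pairs produce the $C$-factors $-\deg E_1 - 2$, $-\deg E_2 - 2$ and $\deg(E_1 - E_2) - 2$; because each $\deg E_i$ is divisible by $4$, every one of these is $\equiv 2 \pmod 4$, hence nonzero. Therefore the vanishing of each Euler characteristic forces the matching $D$-factor to vanish: $\mathrm{Hom}^\bullet(L_1,L_0)=0$ gives $\deg F_1 = -4$, $\mathrm{Hom}^\bullet(L_2,L_0)=0$ gives $\deg F_2 = -4$, and $\mathrm{Hom}^\bullet(L_2,L_1)=0$ gives $\deg F_1 - \deg F_2 = 4$. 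The first two yield $\deg F_1 - \deg F_2 = 0$, contradicting the third, and this contradiction proves the claim.

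The crux is the arithmetic asymmetry between the two curves: $g_C - 1 = 2$ is not divisible by $4$ whereas $g_D - 1 = 4$ is, while all orbit degrees are divisible by $4$. This is precisely what keeps the $C$-factor of every Euler characteristic away from zero while allowing the $D$-factor to vanish, and it is the mechanism that defeats the $E_i + F_i$ construction here, in contrast with the $(\mathbb{Z}/3)^2$ case treated above. I expect the only delicate point to be confirming that every nontrivial stabilizer has order $2$ (so that the orbit degrees are genuinely $4$); once that is in place, the argument is a short Euler-characteristic computation that never needs the individual cohomology groups or the characters $\chi_i$.
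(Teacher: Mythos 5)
Your proposal is correct and follows essentially the same route as the paper: both reduce exceptionality to the vanishing of three character-independent Euler characteristics, then use divisibility of all orbit degrees by $4$ to see that the $C$-factor of each product is $\equiv 2 \pmod 4$, hence nonzero, so the $D$-factors must satisfy incompatible conditions. The only cosmetic differences are that you compute $\chi$ upstairs on $X$ via K\"unneth and divide by $|G|$ whereas the paper applies Riemann--Roch directly on $S$, and your sign bookkeeping is actually the more careful of the two (the paper's displayed formula has a harmless sign slip, giving $f_i=4$ rather than $f_i=-4$, which does not affect the contradiction).
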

\begin{proof}
Suppose that $ \mathcal{O}_X,\mathcal{O}_X(E_1+F_1)(\chi_1),\mathcal{O}_X(E_2+F_2)(\chi_2) $ is an exceptional sequence of line bundles on $S$. Then we get $\chi(\mathcal{O}_X(E_i+F_i)(\chi_i),\mathcal{O}_X)=0$ for $i \in \{1,2\}$, and $\chi(\mathcal{O}_X(E_2+F_2)(\chi_2),\mathcal{O}_X(E_1+F_1)(\chi_1))=0$. From Riemann-Roch formula we get
\[ \chi(\mathcal{O}_X(-E_i-F_i)(\chi_i^{-1})) = \chi(\mathcal{O}_S)+\frac{1}{16} \mathcal{O}(-E_i-F_i) \cdot \mathcal{O}(-E_i-F_i+K_C+K_D) = \frac{1}{8}(e_i-2)(f_i-4) =0 , \]
where $e_i=$degree of $E_i$, $f_i=$degree of $F_i$, $i \in \{1,2\}$.
Degrees of all set-theoretic orbits on $C$ and $D$ are multiples of $4$. Therefore we get $f_1=f_2=4$. However then we have
\[ \chi(\mathcal{O}_X(E_1+F_1-E_2-F_2)(\chi_1+\chi_2^{-1})) = \chi(\mathcal{O}_S)+\frac{1}{16} \mathcal{O}(E_1+F_1-E_2-F_2) \cdot \mathcal{O}(E_1+F_1-E_2-F_2+K_C+K_D) \]
\[ = \frac{1}{8}(e_2-e_1-2)(f_2-f_1-4) \neq 0. \]
Therefore $ \mathcal{O}_X,\mathcal{O}_X(E_1+F_1)(\chi_1),\mathcal{O}_X(E_2+F_2)(\chi_2) $ cannot be an exceptional sequence on $S$.
\end{proof}

\begin{prop}
Let $G=(\mathbb{Z}/2)^4$, $X:=C \times D$ and $S:=(C \times D)/G$ be a surface isogenous to a higher product with $p_g=q=0$ of unmixed type. Then $C$ and $D$ are curves of genus $5$ (see {\cite{BC}}). Let $E$ be a linear combination of set-theoretic orbits on $C$, $F$ be a linear combination of set-theoretic orbits on $D$. By abuse of notation, we let $\mathcal{O}_X(E)$ (respectively, $\mathcal{O}_X(F)$) denote the pullback of $ \mathcal{O}_C(E) $ (respectively, $ \mathcal{O}_D(F)$). For any character $\chi \in Hom(G,\mathbb{C}^*)$, we can identify the equivariant line bundles $\mathcal{O}_X(E+F)(\chi)$ on $X$ with a line bundles on $S$. Then $$ \mathcal{O}_X,\mathcal{O}_X(E+F)(\chi) $$ cannot be an exceptional sequence on $S$.
\end{prop}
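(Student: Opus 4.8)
The plan is to argue by contradiction, in exact parallel with the preceding proposition. Suppose $\mathcal{O}_X,\ \mathcal{O}_X(E+F)(\chi)$ is an exceptional sequence on $S$. By the definition of an exceptional sequence (the condition for $i=2>j=1$), we must have $Hom^k(\mathcal{O}_X(E+F)(\chi),\mathcal{O}_X)=0$ for all $k$. Identifying these equivariant line bundles with line bundles on $S$, this says $H^k(S,\mathcal{O}_S(-E-F)(\chi^{-1}))=0$ for all $k$, and in particular the Euler characteristic $\chi(\mathcal{O}_X(-E-F)(\chi^{-1}))$ vanishes.

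First I would compute this Euler characteristic by Riemann--Roch, pulling the intersection numbers back to $X=C\times D$ and dividing by $|G|=16$. Writing $e=\deg E$ and $f=\deg F$, and using that $K_S$ corresponds to $K_C+K_D$ with $\deg K_C=\deg K_D=2\cdot 5-2=8$, the same computation as in the $G=(\mathbb{Z}/2)^3$ case gives
\[ \chi(\mathcal{O}_X(-E-F)(\chi^{-1})) = \chi(\mathcal{O}_S)+\frac{1}{32}\,\mathcal{O}(-E-F)\cdot\mathcal{O}(-E-F+K_C+K_D) = \frac{1}{16}(e-4)(f-4). \]
Since the torsion twist $\chi$ does not affect the numerical invariants $L^2$ and $L\cdot K_S$, this value is independent of $\chi$, as expected. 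Thus exceptionality forces $(e-4)(f-4)=0$, i.e. $e=4$ or $f=4$.

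The key remaining step is to rule this out using the degrees of the set-theoretic orbits. Applying Riemann--Hurwitz to $C\to C/G\cong\mathbb{P}^1$ with the ramification data of \cite{BC}, all stabilizers of the $G=(\mathbb{Z}/2)^4$ action have order $2$ (the generating system consists of involutions), so each set-theoretic orbit of ramification points on $C$ consists of $|G|/2=8$ points and has degree $8$; the same holds on $D$. Hence any $E$ (respectively $F$) that is a linear combination of such orbits has degree $e\in 8\mathbb{Z}$ (respectively $f\in 8\mathbb{Z}$). In particular $e\neq 4$ and $f\neq 4$, so $(e-4)(f-4)\neq 0$, which contradicts $\chi(\mathcal{O}_X(-E-F)(\chi^{-1}))=0$. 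Therefore no such exceptional sequence exists.

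I expect the main obstacle to be the orbit-degree input rather than the cohomological computation: one must confirm from the classification in \cite{BC} that every branch point of $C\to\mathbb{P}^1$ has stabilizer of order exactly $2$, which is what forces all orbit degrees to be divisible by $8$ and hence different from the forbidden value $4$. The Euler characteristic computation itself is routine and identical in form to the $G=(\mathbb{Z}/2)^3$ case; the genuinely case-specific content is that, for $(\mathbb{Z}/2)^4$, the single obstruction $(e-4)(f-4)=0$ can never be satisfied by admissible orbit degrees.
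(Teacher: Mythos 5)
Your proposal is correct and follows the paper's proof essentially verbatim: assume exceptionality, deduce $\chi(\mathcal{O}_X(-E-F)(\chi^{-1}))=0$, compute it by Riemann--Roch as $\tfrac{1}{16}(e-4)(f-4)$, and contradict this using divisibility of orbit degrees by $8$ --- your only addition is the (correct) justification, left implicit in the paper, that every nontrivial stabilizer in $(\mathbb{Z}/2)^4$ has order $2$, so ramification orbits have $8$ points. One minor remark, shared with the paper: since Riemann--Roch reads $\chi(L)=\chi(\mathcal{O}_S)+\tfrac12 L\cdot(L-K_S)$, the Euler characteristic of $\mathcal{O}_X(-E-F)(\chi^{-1})$ is actually $\tfrac{1}{16}(e+4)(f+4)$ (the displayed formula with $+K_C+K_D$ computes $\chi(\mathcal{O}_X(E+F)(\chi))$ instead), but as $e,f\equiv 0 \ (\mathrm{mod}\ 8)$ both quantities are nonzero, so the contradiction and the conclusion stand unchanged.
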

\begin{proof}
If $ \mathcal{O}_X,\mathcal{O}_X(E+F)(\chi) $ is an exceptional sequence of line bundles, then $\chi(\mathcal{O}_X(E+F)(\chi),\mathcal{O}_X)=0$. From Riemann-Roch formula we get
\[ \chi(\mathcal{O}_X(-E-F)(\chi^{-1})) = \chi(\mathcal{O}_S)+\frac{1}{32} \mathcal{O}(-E-F) \cdot \mathcal{O}(-E-F+K_C+K_D) = \frac{1}{16}(e-4)(f-4), \]
where $e=$degree of $E$, $f=$degree of $F$.
However degrees of all set-theoretic orbits on $C$ and $D$ are multiples of $8$. Therefore $\chi(\mathcal{O}_X(E+F)(\chi),\mathcal{O}_X) \neq 0$ and $ \mathcal{O}_X,\mathcal{O}_X(E+F)(\chi) $ cannot be an exceptional sequence on $S$.
\end{proof}

In the forthcoming paper \cite{Lee} we will show that there exist exceptional sequences of line bundles of maximal length on surfaces isogenous to a higher product of unmixed type with $p_g=q=0$, $G=(\mathbb{Z}/2)^3$ or $G=(\mathbb{Z}/2)^4$ via different method. The structures of quasiphantom categories are still mysterious and interesting.


\begin{thebibliography}{1}

\bibitem{AO}
{\sc V. Alexeev \and D. Orlov}, Derived categories of Burniat
surfaces and exceptional collections. Math. Ann. 357 (2013), no. 2,
743-759.
%
\bibitem{BHPV}
{\sc W. P. Barth, K. Hulek, C. A. M. Peters \and A. Van de Ven}, {\em Compact complex surfaces, second ed.}
Ergebnisse der Mathematik und ihrer Grenzgebiete. 3. Folge. A Series of Modern Surveys in Mathematics [Results in Mathematics and Related Areas. 3rd Series. A Series of Modern Surveys in Mathematics] 4 (Springer-Verlag, Berlin, 2004).
%
\bibitem{BC}
{\sc I. Bauer \and F. Catanese}, Some new surfaces with $p_g=q=0$. {\em
The Fano Conference} Univ. Torino Turin (2004) 123-142.
%
\bibitem{BC2}
{\sc I. Bauer \and F. Catanese}, private communication.
%
\bibitem{BCF}
{\sc I. Bauer, F. Catanese \and D. Frapporti}, The fundamental group and torsion group of Beauville surfaces. 2014,
Preprint, arXiv:1402.2109.
%
\bibitem{BCP}
{\sc I. Bauer, F. Catanese \and R. Pignatelli}, Surfaces of general type with geometric genus zero: a survey. {\em Complex and differential geometry} Springer Proc. Math. 8 (Springer, Heidelberg, 2011) 1-48.
%
\bibitem{Be}
{\sc A. Beauville}, {\em Complex algebraic surfaces}. Translated from the 1978 French original by R. Barlow, with assistance from N. I. Shepherd-Barron and M. Reid. Second edition.
London Mathematical Society Student Texts 34 (Cambridge University Press, Cambridge, 1996).
%
\bibitem{BBKS}
{\sc Ch. B\"{o}hning, H-Ch. Graf von Bothmer L. Katzarkov \and P.
Sosna}, Determinantal Barlow surfaces and phantom categories. 2012,
Preprint, arXiv:1210.0343.
%
\bibitem{BBS}
{\sc Ch. B\"{o}hning, H-Ch. Graf von Bothmer \and P. Sosna}, On the
derived category of the classical Godeaux surface. Adv. Math. 243
(2013), 203-231.
%
\bibitem{C}
{\sc S. Coughlan}, Enumerating exceptional collections on some surfaces of general type with $p_g=0$. 2014,
Preprint, arXiv:1402.1540.
%
\bibitem{D1}
{\sc I. Dolgachev}, Algebraic surfaces with $q=p_g=0$. {\em
Algebraic surfaces}, C.I.M.E. Summer Sch. 76 (Springer, Heidelberg,
2010) 97-215.
%
\bibitem{D2}
{\sc I. Dolgachev}, Invariant stable bundles over modular curves
X(p). Recent progress in algebra (Taejon/Seoul, 1997), 65-99,
Contemp. Math., 224, Amer. Math. Soc., Providence, RI, 1999.
%
\bibitem{D3}
{\sc I. Dolgachev}, {\em Lectures on invariant theory}.
London Mathematical Society Lecture Note Series 296 (Cambridge University Press, Cambridge, 2003).
%
\bibitem{F}
{\sc N. Fakhruddin}, Exceptional collections on 2-adically
uniformised fake projective planes, 2013, arXiv:1310.3020.
%
\bibitem{GS}
{\sc S. Galkin \and E. Shinder}, Exceptional collections of line
bundles on the Beauville surface. Adv. Math. 244 (2013), 1033-1050.
%
\bibitem{GKMS}
{\sc S. Galkin \and L. Katzarkov \and A. Mellit \and E. Shinder}, Minifolds and phantoms. Preprint, 2013, arXiv:1305.4549.
%
\bibitem{GO}
{\sc S. Gorchinskiy \and D. Orlov}, Geometric phantom categories.
Publ. Math. Inst. Hautes Etudes Sci. 117 (2013), 329-349.
%
\bibitem{Ha}
{\sc R. Hartshorne}, {\em Algebraic geometry}.
Graduate Texts in Mathematics No. 52. (Springer-Verlag, New York-Heidelberg, 1977).
%
\bibitem{Ke}
{\sc B. Keller}, Introduction to A -infinity algebras and modules. {\em
Homology Homotopy Appl.} 3 (2001) no. 1, 1-35.
%
\bibitem{Ki}
{\sc S.-I. Kimura}, Chow groups are finite dimensional, in some sense. {\em
 Math. Ann.} 331 (2005) no. 1 173-201.
%
\bibitem{Ku}
{\sc A. Kuznetsov}, Hochschild homology and semiorthogonal decompositions.
Preprint, 2009, arXiv:0904.4330.
%
\bibitem{Ku2}
{\sc A. Kuznetsov}, Height of exceptional collections and Hochschild cohomology of quasiphantom categories.
Preprint, 2012, arXiv:1211.4693.
%
\bibitem{Lee}
{\sc K.-S. Lee}, Exceptional sequences of maximal length on some surfaces isogenous to a higher product.
Preprint, 2013, arXiv:1311.5839.
%
\bibitem{Se}
{\sc J.-P. Serre}, {\em Linear representations of finite groups}. Translated from the second French edition by Leonard L. Scott. Graduate Texts in Mathematics Vol. 42 (Springer-Verlag, New York-Heidelberg, 1977).
%
\bibitem{Sei}
{\sc P. Seidel}, {\em Fukaya categories and Picard-Lefschetz theory}.
Zurich Lectures in Advanced Mathematics. (European Mathematical Society , Zürich, 2008).1
%
\bibitem{Sh}
{\sc T. Shabalin}, Homology of some surfaces with $p_g=q=0$ isogenous to a product. 2013,
Preprint, arXiv:1311.4048.
%
\end{thebibliography}
\end{document}